\theoremstyle{plain}
\newtheorem{theorem}{\bf Theorem}[section]
\newtheorem{proposition}[theorem]{\bf Proposition}
\newtheorem{lemma}[theorem]{\bf Lemma}
\theoremstyle{definition}
\newtheorem{example}[theorem]{\bf Example}
\newtheorem{definition}[theorem]{\bf Definition}
\newcommand{\N}{\mathbb N}
\newcommand{\Z}{\mathbb Z}
\newcommand{\R}{\mathbb R}
\newcommand{\Q}{\mathbb Q}
\newcommand{\DP}{\negthinspace : \negthinspace}
\renewcommand{\t}{\, | \,}
\newcommand{\LK}{\,[\![}
\newcommand{\RK}{]\!]}
\newcommand{\BF}{\text{\rm BF}}
\DeclareMathOperator{\Int}{Int} 
   \DeclareMathOperator{\id}{id}
\newcommand{\red}{{\text{\rm red}}}
\renewcommand{\time}{\negthinspace \times \negthinspace}
\numberwithin{equation}{section}
\newcommand\zeu@Scale{1.25}
\begin{document}

\author{Qinghai Zhong}

\address{University of Graz, NAWI Graz \\
Institute for Mathematics and Scientific Computing \\
Heinrichstra{\ss}e 36\\
8010 Graz, Austria}

\email{qinghai.zhong@uni-graz.at}
\urladdr{http://qinghai-zhong.weebly.com/}

\keywords{finitely generated monoids, locally finitely generated monoids, elasticity, asymptotic elasticity, fully elastic, weakly Krull monoid}

\subjclass[2010]{13A05, 13F05, 20M13}

\thanks{This work was supported by the Austrian Science Fund FWF, Project Number P28864-N35}

\begin{abstract}
Let $H$ be a commutative and cancellative monoid. The elasticity  $\rho(a)$ of a non-unit $a \in H$ is the supremum  of  $m/n$ over all $m, n$ for which there are factorizations of the form $a=u_1 \cdot \ldots \cdot u_m=v_1 \cdot \ldots \cdot v_{n}$, where all $u_i$ and $v_j$ are irreducibles. The elasticity $\rho (H)$ of $H$ is the supremum over all $\rho (a)$. We establish a characterization, valid for finitely generated monoids, when every rational number $q$ with $1< q < \rho (H)$ can be realized as the elasticity of some element $a \in H$. Furthermore, we derive results of a similar flavor for locally finitely generated monoids (they include all Krull domains and orders in Dedekind domains satisfying certain algebraic finiteness conditions) and for weakly Krull domains.
\end{abstract}

\title{On elasticities of locally finitely generated monoids}
\maketitle

\medskip
\section{Introduction} \label{1}
\medskip
In this paper, a {\it monoid} means a commutative cancellative semigroup with identity element and the monoids we mainly have in mind are multiplicative monoids of nonzero elements of domains. A monoid is said to be locally finitely generated if for every given element $a$ there are only finitely many irreducibles (up to associates) which divide some power of $a$. Krull domains are locally finitely generated and more examples are given in Section \ref{2} (see Examples \ref{2.1}).

Let $H$ be a monoid and $a \in H$. If $a$ has a factorization into irreducibles, say $a = u_1\cdot \ldots\cdot u_k$, then $k$ is called a factorization length and the set $\mathsf L (a) \subset \N$ of all possible factorization lengths is called the set of lengths of $a$. For convenience we set $\mathsf L (a) = \{0\}$ if $a$ is a unit. The monoid $H$ is said to be a BF-monoid if every non-unit has a factorization into irreducibles and all sets of lengths are finite. It is well-known that $v$-noetherian monoids  are BF-monoids.

Suppose that $H$ is a BF-monoid. The system $\mathcal L (H) = \{\mathsf L (a) \mid a \in H \}$ of sets of lengths, and all parameters controlling $\mathcal L (H)$, are a well-studied means to describe the non-uniqueness of factorizations of BF-monoids. Besides distances of factorizations, elasticities belong to the main parameters. For a finite set $L \subset \N$, $\rho (L) = \max L/\min L $ denotes the elasticity of $L$ and, for an element $a \in H$, the elasticity $\rho (a)$ of $a$ is the elasticity of its set of lengths. The elasticity $\rho (H)$ of $H$ is the supremum of $\rho (L)$ over all $L \in \mathcal L (H)$.

 Since the late 1980s various aspects of elasticities have found wide attention in the literature. We refer to a survey by David F. Anderson \cite{An97a} for work till 2000 and to  \cite{Ch-St10a, Ge-Gr-Yu15, Ba-Co16a, Ka16b, Ba-Ne-Pe17a} for a sample of papers in the last years. To mention some results explicitly, we recall that
for every $r \in \R_{\ge 1} \cup \{\infty\}$, there is a Dedekind domain $R$ with torsion class group such that $\rho(R)=r$ (\cite{An-An92}).
 A characterization of when the elasticity of finitely generated domains is finite is given in \cite{Ka05a}.   The elasticity of  C-monoids (they include wide classes of Mori domains with nontrivial conductor) is rational or infinite (\cite{Zh19b}).

In \cite{Ch-Ho-Mo06}, Chapman et al. initiated the study of the set $\{\rho (L) \mid L \in \mathcal L (H) \} \subset \Q_{\ge 1}$ of all elasticities.
We say that $H$ is {\it fully elastic} if for every rational number $q$ with $1 < q < \rho (H)$ there is an $L \in \mathcal L (H)$ such that $\rho (L)=q$.
Monoids having accepted elasticity  and  having a prime element are fully elastic (\cite{B-C-C-K-W06}), and it was shown only recently that all  transfer Krull monoids (they include Krull domains and  wide classes of non-commutative Dedekind domains) are fully elastic (\cite[Theorem 3.1]{Ge-Zh19a}).
On the other hand, strongly primary monoids (including one-dimensional local Mori domains and numerical monoids) are not fully elastic (\cite[Theorem 5.5]{Ge-Sc-Zh17b}).  Arithmetic congruence monoids which are not fully elastic can be found in the survey \cite{Ba-Ch14a}.

Anderson and Pruis \cite{An-Pr91} studied, for every $a \in H$, the quantities
\[
\rho_* (a) = \lim_{n \to \infty} \frac{\min \mathsf L (a^n)}{n} \quad \text{and} \quad \rho^* (a) = \lim_{n \to \infty}\frac{\max \mathsf L (a^n)}{n}
\]
(see also \cite{An-Ju17a}) and they conjectured these invariants are rational for Krull domains and for noetherian domains. This was confirmed in \cite{Ge-HK92b} for Krull domains and for various classes  of noetherian domains but is still open in general. In \cite{B-C-H-M06}, Baginski et al. introduced the concept of asymptotic elasticities.
For $a \in H\setminus H^{\times}$,
\[
\overline{\rho}(a)=\lim_{n\rightarrow\infty}\rho(a^n) = \frac{\rho^* (a)}{\rho_* (a)}
\]
is the  {\it asymptotic elasticity} of $a$, and
\[
\overline{R}(H)=\{\overline{\rho}(a)\mid a \in H\setminus H^{\times} \}\subset \R_{\ge 1}\cup\{\infty\}
\]
denotes the {\it  set of asymptotic elasticities} of $H$.
We say that $H$ is {\it asymptotic fully elastic} if for every rational number $q$ with $\inf \overline{R}(H) < q < \rho (H)$ there is an $a\in H$ such that $\overline{\rho} (a)=q$ (note that $\sup \overline{R}(H)=\rho(H)$). Now we can formulate our main results.

\smallskip
\begin{theorem} \label{theorem1}
Let $H$ be a locally finitely generated monoid. Then $H$ is asymptotic fully elastic and if
 $\inf \overline{R}(H)=1$, then $H$ is fully elastic.
\end{theorem}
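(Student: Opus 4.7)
The plan is to reduce both statements to computations inside a single finitely generated submonoid of $H$ and then exploit the polyhedral structure of factorizations there. For any $a, b \in H$, the submonoid $H_0 \subset H$ generated by all irreducibles of $H$ which divide some power of $ab$ is finitely generated by hypothesis, and every factorization of an element of the form $a^m b^n$ uses only irreducibles from $H_0$. Consequently $\mathsf L(a^m b^n)$, $\rho^*(a^m b^n)$, $\rho_*(a^m b^n)$, and $\overline{\rho}(a^m b^n)$ agree whether computed in $H$ or in $H_0$, so it suffices to work inside a finitely generated monoid when constructing elements of prescribed elasticity.

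In such a finitely generated monoid, factorizations of an element $x$ correspond to non-negative integer solutions $y$ of a linear system $V y = v(x)$ in the quotient group, and $\rho^*(x)$ and $\rho_*(x)$ coincide with the optimal values of the linear programming relaxations of the corresponding integer programs. These LP values are positively homogeneous, continuous, piecewise-linear functions of $v(x)$ with rational coefficients. For the first part, given a rational $q$ with $\inf \overline{R}(H) < q < \rho(H) = \sup \overline{R}(H)$, I pick $a, b \in H$ with $\overline{\rho}(a) < q < \overline{\rho}(b)$. Since $v(a^m b^n) = m\, v(a) + n\, v(b)$, the function
\[
\phi(m,n) \;=\; \overline{\rho}(a^m b^n) \;=\; \frac{\rho^*(a^m b^n)}{\rho_*(a^m b^n)}
\]
is continuous and piecewise rational in the ratio $m:n$ and satisfies $\phi(1,0) = \overline{\rho}(a) < q < \overline{\rho}(b) = \phi(0,1)$. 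The intermediate value theorem, together with the piecewise rational form of $\phi$, produces positive integers $m, n$ with $\phi(m, n) = q$, proving the asymptotic full elasticity.

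For the second statement, one must upgrade $\overline{\rho}(c) = q$ to $\rho(L) = q$ for a genuine set of lengths $L \in \mathcal L(H)$. Given a rational $q \in (1, \rho(H))$, the first part produces $c \in H$ with $\overline{\rho}(c) = q$. In the finitely generated submonoid $H_c$, Ehrhart quasi-polynomial theory implies that $N \mapsto \max \mathsf L(c^N)$ and $N \mapsto \min \mathsf L(c^N)$ are eventually quasi-linear with leading slopes $\rho^*(c)$ and $\rho_*(c)$ and bounded periodic correction terms. For $N$ in a suitable arithmetic progression one would like $\max \mathsf L(c^N) = N\rho^*(c)$ and $\min \mathsf L(c^N) = N\rho_*(c)$ simultaneously, so that $\rho(c^N) = q$ exactly. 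The main obstacle is to guarantee that both equalities hold for a common $N$; when they fail for $c$ itself, the hypothesis $\inf \overline{R}(H) = 1$ supplies auxiliary elements $d$ with $\overline{\rho}(d)$ arbitrarily close to $1$, and one replaces $c$ by a product $c\,d^k$ inside a larger finitely generated submonoid. The extra degrees of freedom in the triple $(c, d, k)$ should suffice to align the periodic correction terms in the quasi-linear expansions of $\max \mathsf L$ and $\min \mathsf L$, so that some set of lengths $L$ attains $\rho(L) = q$ on the nose and full elasticity follows.
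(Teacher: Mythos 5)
Your first paragraph is essentially the same reduction the paper uses (passing to the finitely generated divisor-closed submonoid $\LK ab\RK$), and the linear-programming heuristic for the first assertion is plausible: $\rho^*$ and $\rho_*$ are indeed the LP optima and hence piecewise linear with rational data, so on each linearity piece the equation $\rho^*(a^mb^n)=q\,\rho_*(a^mb^n)$ is a rational linear equation and the intermediate-value argument produces a rational ratio $m:n$. The paper obtains the same interval structure for $\overline{R}$ by a different, more combinatorial route -- saturated submonoids of $\mathsf Z(H)\times\mathsf Z(H)$, the additive decompositions in Lemma~\ref{3.2}, and the ``nice pair'' machinery of Proposition~\ref{3.8} -- which in addition pins down the exact piecewise Mobius form of the elasticity and is used again in Theorem~\ref{theorem2}; your route, if pushed through, would prove only what is needed for Theorem~\ref{theorem1}.

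The second half of your argument, however, has a genuine gap that stems from misidentifying both the obstacle and the role of the hypothesis. The upgrade from $\overline{\rho}(c)=q$ to ``$q$ is an actual elasticity'' is \emph{automatic} in a finitely generated monoid and does not require any alignment of periodic correction terms: by \cite[Theorem 3.8.1]{Ge-HK06a} (stated here as Lemma~\ref{3.6}.1) there is an $N$ with $\rho(c^{tN})=\rho(c^N)$ for all $t$ and $\overline{\rho}(c)=\rho(c^N)$, so $q=\rho(\mathsf L(c^N))$ outright. In LP language: the maximizing and minimizing vertices of the rational factorization polytope of $c$ become integral after clearing a single denominator $N$, so at $c^N$ the IP and LP optima for both $\max\mathsf L$ and $\min\mathsf L$ coincide simultaneously; the ``common $N$'' problem you raise does not arise. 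Consequently the auxiliary elements $d$ with $\overline{\rho}(d)$ near $1$ and the proposed alignment of quasi-linear corrections are addressing a phantom difficulty, and the step ``should suffice to align the periodic correction terms'' is not a proof. The actual role of the hypothesis $\inf\overline{R}(H)=1$ is entirely different and much simpler: it guarantees that the rational interval $(\inf\overline{R}(H),\rho(H))$ produced by the first part is all of $(1,\rho(H))$, so that every target $q$ is realized as an asymptotic elasticity in the first place. Without that hypothesis the asymptotic elasticities miss the range $(1,\inf\overline{R}(H))$, and indeed Theorem~\ref{theorem2} shows that for finitely generated $H$ this gap really obstructs full elasticity.
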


Every Krull monoid $H$ is locally finitely generated with $\inf \overline{R}(H)=1$ (\cite[Proposition 2.7.8.3]{Ge-HK06a} and \cite[Lemma 5.4]{Ge-Sc-Zh17b}).
However, there are   locally finitely generated monoids $H$ with $\inf \overline{R}(H)>1$ that are fully elastic (Example \ref{ex}).

\begin{theorem}\label{theorem2}
Let $H$ be a monoid such that $H_{\red}$ is  finitely generated and let $r=\inf  \overline{R}(H)$. Then
$$\{q\in \Q\mid r\le q\le \rho(H)\}\subset \{\rho(L)\mid L\in \mathcal L(H)\}$$
and $r$ is the only possible limit point of the set $\{\rho(L)\mid L\in \mathcal L(H)\text{ and }1\le \rho(L)< r\}$.
Moreover, $H$ is fully elastic if and only if $r=1$.
\end{theorem}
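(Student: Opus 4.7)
Since every finitely generated monoid is locally finitely generated, Theorem~\ref{theorem1} applies: $H$ is asymptotic fully elastic, so every rational $q\in (r,\rho(H))$ arises as $\overline{\rho}(a)$ for some $a\in H$. The plan is to (i) upgrade these asymptotic realizations to exact ones, (ii) treat the endpoints $q=\rho(H)$ and $q=r$, and (iii) prove that below $r$ no accumulation point other than $r$ itself can occur; the final equivalence then follows formally.

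For (i) and (ii), the finite generation of $H_{\red}$ lets me appeal to the linear-programming / Ehrhart structure of factorizations. Concretely, factorizations of $a^n$ correspond to non-negative integer solutions of a fixed rational linear system with right-hand side scaled by $n$, i.e.\ to lattice points in the $n$-th dilate of a fixed rational polytope. Consequently $\rho(H)$ is the optimum of a rational LP and therefore rational and accepted, each $\rho_*(a),\rho^*(a)$ is rational, and both defects
$n\rho^*(a)-\max\mathsf L(a^n)$ and $\min\mathsf L(a^n)-n\rho_*(a)$
are non-negative, bounded and eventually periodic in $n$. Along a suitable arithmetic progression of exponents they vanish simultaneously, yielding $\rho(a^N)=\overline{\rho}(a)$ for infinitely many $N$. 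Combined with Theorem~\ref{theorem1} and the accepted elasticity, this gives $\rho(L)=q$ for some $L\in\mathcal L(H)$ whenever $r<q\le\rho(H)$ is rational. When $r$ is itself rational, compactness of the (finite-dimensional) cone of directions in $H_{\red}$ together with the piecewise-linear/rational nature of $\rho_*$ and $\rho^*$ implies that the infimum $r$ is attained as $\overline{\rho}(a_0)$; the same power argument then realizes $r$ as an actual elasticity.

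For (iii), suppose a sequence of elasticities $\rho(a_n)<r$ converges to some $s<r$. Using finite generation, after passing to a subsequence the "directions" of the $a_n$ in the finite-dimensional cone attached to $H_{\red}$ converge, and on the corresponding closed face $\overline{\rho}$ is a ratio of piecewise linear functions bounded below by $r$; thus $\liminf\overline{\rho}(a_n)\ge r$. The uniformly bounded, eventually periodic correction terms from step~(i), applied to powers of each $a_n$ in a controlled way, force $\overline{\rho}(a_n)-\rho(\widetilde a_n)\to 0$ along a replacement $\widetilde a_n$ that is some power of $a_n$ (and hence has the same asymptotic elasticity); but $\rho(\widetilde a_n)\ge\rho(a_n)$, so in the limit $s\ge r$, contradicting $s<r$. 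Thus $r$ is the only possible limit point of $\{\rho(L):\rho(L)<r\}$.

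The final equivalence is then immediate: if $r=1$, step~(ii) realizes every rational in $(1,\rho(H))$ as an elasticity, so $H$ is fully elastic; if $r>1$, step~(iii) forces an open subinterval of $(1,r)$ to be disjoint from $\{\rho(L):L\in\mathcal L(H)\}$, so $H$ is not fully elastic. The main obstacle I anticipate is step~(iii): one needs uniformity of the Ehrhart/periodicity defects across a \emph{varying} family of elements, rather than for a single element fixed in advance. Once this uniformity is extracted from the finite generation of $H_{\red}$, the rest of the argument is a bookkeeping combination with Theorem~\ref{theorem1}.
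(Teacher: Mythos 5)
Your step (iii) contains a genuine logical gap beyond the uniformity issue you flag. You pass to powers $\widetilde a_n$ of $a_n$ so that $\rho(\widetilde a_n)\to\overline{\rho}(a_n)\ge r$, and then invoke $\rho(\widetilde a_n)\ge\rho(a_n)$ to conclude $s\ge r$. But this inequality runs in the \emph{wrong direction}: $\liminf\rho(\widetilde a_n)\ge r$ together with $\rho(\widetilde a_n)\ge\rho(a_n)$ gives no lower bound at all on $\rho(a_n)$. For example, if every $a_n$ is an atom then $\rho(a_n)=1$ while $\rho(\widetilde a_n)$ can approach $r>1$, and nothing is contradicted. What you actually need is a lower bound on $\rho(a_n)$ itself, not on $\rho$ of some power, and passing to powers only increases elasticity. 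So the argument for the accumulation-point statement does not close, even granting the Ehrhart-style uniformity you request.

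The paper avoids all of this with an elementary counting argument that you should compare against. It fixes, for each $a_k$, a factorization $z_k=u_1^{t_k^{(1)}}\cdots u_m^{t_k^{(m)}}$ of \emph{minimum} length, so $\min\mathsf L(a_k)=\sum_j t_k^{(j)}$. The crucial observation is that minimality forces $\min\mathsf L(u_j^{t_k^{(j)}})=t_k^{(j)}$ exactly (otherwise you could shorten $z_k$). Splitting the atoms into those whose exponents $t_k^{(j)}$ stay bounded (by some $M$) and those whose exponents are unbounded, one gets
\[
\rho(a_k)\ \ge\ \frac{\sum_{j\notin I}\max\mathsf L(u_j^{t_k^{(j)}})}{\sum_j t_k^{(j)}}\ =\ \frac{\sum_{j\notin I}\rho(u_j^{t_k^{(j)}})\,t_k^{(j)}}{\sum_j t_k^{(j)}}\,,
\]
and since $\rho(u_j^t)\to\overline{\rho}(u_j)\ge r$ as $t\to\infty$ while the contribution of the bounded exponents is $O(M|I|)$, the right-hand side tends to a value $\ge r$, contradicting $\rho(a_k)\le r-\epsilon$. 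No Ehrhart uniformity across a varying family is needed; the single-element limit together with the minimum-length factorization trick suffices. By contrast, your steps (i) and (ii) are in the right spirit but reprove, via an unfilled-in polytope argument, what the paper already obtains directly from \cite[Theorem 3.8.1]{Ge-HK06a} and its Lemma~\ref{3.6} and Proposition~\ref{3.8} (finitely many atoms make $r$ a minimum over atoms, no compactness argument required). If you want to pursue the Ehrhart route you would still have to replace step (iii) by something along the lines of the paper's bound on $\rho(a_k)$, since the inequality you use cannot produce the needed contradiction.
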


In Section \ref{2} we provide the required background. The proofs of Theorems \ref{theorem1} and \ref{theorem2} are  given in Section \ref{3}, and then in  Section \ref{4} we apply our results to $v$-noetherian weakly Krull monoids.

\medskip
\section{Background on monoids and their arithmetic} \label{2}
\medskip

Our notation and terminology are consistent with \cite{Ge-HK06a}.
 Let $\mathbb N$ denote the set of positive integers and $\N_0=\N\cup\{0\}$. For $k \in \N$, we denote by $\N_{\ge k}$ the set of all integers greater than or equal to $k$ and
 for  $a, b \in \mathbb Q$, we denote
by $[a, b ] = \{ x \in \mathbb Z \mid a \le x \le b\}$ the discrete, finite interval between $a$ and $b$. For a finite subset $L\subset \N$, we set $\rho(L)=\max L/\min L$ and $\rho (\{0\})=1$.  For  subsets $A,B\subset \Z$,  $A+B=\{a+b\mid a\in A, b\in B\}$ denotes their sumset.

\smallskip
\noindent
{\bf Monoids.} Throughout this paper, a {\it monoid} means a commutative cancellative semigroup with identity element, and we use multiplicative notation. If $R$ is a domain, then its multiplicative semigroup $R^{\bullet} = R \setminus \{0\}$ of nonzero elements is a monoid.

Let $H$ be a monoid with identity element $1=1_H\in H$. We denote by $H^{\times}$  the unit group of $H$,  by $H_{\red}=H/H^{\times}=\{aH^{\times}\mid a\in H\}$  the associated reduced monoid, and by $\mathsf q(H)$  the quotient group of $H$. If $H^{\times}=\{1\}$, we say that $H$ is reduced. Two elements $a, b \in H$ are said to be associated if $aH^{\times} = b H^{\times}$. 
A submonoid $S \subset H$ is said to be 
\begin{itemize}
\item saturated if $a,b\in S$ and $a\t_H\, b$ implies that $a\t_S\, b$ ( equivalently, $S=\mathsf q(S)\cap H$ ).

\item divisor-closed if $a \in S$, $b \in H$, and $b \mid a$ implies that $b \in S$.
\end{itemize}
 If $a \in H$, then
\[
\LK a \RK = \{ b \in H \mid b \mid a^n \ \text{for some $n \in \N$} \} \subset H
\]
is the smallest divisor-closed submonoid of $H$ containing $a$.
   An element $u \in H$ is said to be irreducible (or an atom) if $u \notin H^{\times}$ and any equation of the form $u=ab$, with $a, b \in H$, implies that $a \in H^{\times}$ or $b \in H^{\times}$. Let  $\mathcal A (H)$ denote the set of atoms. 
    For a set $\mathcal P$, we denote by $\mathcal F (\mathcal P)$ the \ {\it free
   abelian monoid} \ with basis $\mathcal P$. Then every $a \in \mathcal F (\mathcal P)$ has a unique representation of the form
   \[
   a = \prod_{p \in \mathcal P} p^{\mathsf v_p(a) } \quad \text{with} \quad
   \mathsf v_p(a) \in \N_0 \ \text{ and } \ \mathsf v_p(a) = 0 \ \text{
   for almost all } \ p \in \mathcal P \,,
   \]
   and we  call $|a|_{\mathcal F (\mathcal P)} = |a|= \sum_{p \in \mathcal P}\mathsf v_p(a)$ the \emph{length} of $a$.

   The monoid $H$ is said to be
\begin{itemize}
\item {\it atomic} if every non-unit is a finite product of atoms.

\item {\it factorial} if it is atomic and every atom is prime.

\item {\it finitely generated} if it has a finite generating set (equivalently, $H_{\red}$ and $H^{\times}$ are both finitely generated).

\item {\it locally finitely generated} if for every $a \in H$ the monoid $\LK a \RK_{\red  }$ is finitely generated (equivalently, there are only finitely many atoms (up to associates) dividing some power of $a$).
\end{itemize}
We gather some examples of locally finitely generated monoids.

\smallskip
\begin{example} \label{2.1}~

1. Clearly, every monoid $H$ having (up to associates) only finitely many atoms that are not prime is locally finitely generated. Atomic domains, with almost all atoms being prime, are called generalized Cohen-Kaplansky domains and they were introduced in \cite{An-An-Za92b}. The monoid of integral invertible ideals of a domain is finitely generated if and only if the domain is a Cohen-Kaplansky domain (\cite[Theorem 4.3]{An-Mo92}).

\smallskip
2. Saturated submonoids (whence, in particular, divisor-closed submonoids) and coproducts of locally finitely generated monoids are locally finitely generated (\cite[Proposition 2.7.8]{Ge-HK06a}). Thus, Krull monoids are locally finitely generated.

\smallskip
3. Let $R$ be a factorial domain. Then the ring of integer-valued polynomials is (in general) not a Krull domain. But for every $f \in \Int (R)$, the submonoid $\LK f \RK \subset \Int (R)$ is Krull (\cite[Theorem 5.2]{Re14a}) whence $\Int (R)$ is locally finitely generated.

\smallskip
4. Let $R$ be an order in a Dedekind domain, say $R \subset \overline R$, where $\overline R$ is the integral closure of $R$ and $\overline R$ is a Dedekind domain. If the class group $\mathcal C (\overline R)$ and  the residue class ring $\overline R/(R \DP \overline R)$ is finite, and $R$ has finite elasticity, then $R$ is locally finitely generated (see \cite[Corollary 3.7.2]{Ge-HK06a} and \cite[Theorem 3.7.1]{Ge-HK06a} for a more general result in the setting of weakly Krull domains).
\end{example}

\smallskip
\noindent
{\bf Arithmetic of monoids.}
If $a \in H \setminus H^{\times}$ and $a=u_1 \cdot \ldots \cdot u_k$, where $k \in \N$ and $u_1, \ldots, u_k \in \mathcal A (H)$, then $k$ is a factorization length of $a$, and
\[
\mathsf L (a) = \{k \mid k \ \text{is a factorization length of} \ a \} \subset \N
\]
denotes the {\it set of lengths} of $a$. It is convenient to set $\mathsf L (a) = \{0\}$ for all $a \in H^{\times}$. The family
\[
\mathcal L (H) = \{\mathsf L (a) \mid a \in H \}
\]
is called the {\it system of sets of lengths} of $H$. The monoid $H$ is said to be
\begin{itemize}
\item {\it half-factorial} if it is atomic and $|L|=1$ for every $L \in \mathcal L (H)$.

\item {\it a \BF-monoid} if it is atomic and $L$ is finite for every  $L \in \mathcal L (H)$.

\end{itemize}
Clearly, every factorial monoid is half-factorial (but not conversely) and every $v$-noetherian monoid is a \BF-monoid (\cite[Theorem 2.2.9]{Ge-HK06a}).
Let $H$ be a \BF-monoid. The {\it elasticity} $\rho (a)$ of an element $a \in H$ is defined as the elasticity of its set of lengths whence
\[
\rho (a) = \rho ( \mathsf L (a) ) = \frac{\max \mathsf L (a)}{\min \mathsf L (a)}
\]
and the supremum
\[
\rho (H) = \sup \{\rho (L) \mid L \in \mathcal L (H) \} \in \R_{\ge 1} \cup \{\infty\}
\]
denotes the {\it elasticity} of $H$. The monoid $H$
\begin{itemize}
\item has {\it  accepted elasticity} if there is some $L \in \mathcal L (H)$ such that $\rho (L)= \rho (H)$.

\item is {\it fully elastic} if for every $q \in \Q$ with $1 < q < \rho (H)$ there is some $L \in \mathcal L (H)$ such that $\rho (L)=q$.
\end{itemize}
The {\it asymptotic elasticity $\overline{\rho}(a)$} of an element $a \in H\setminus H^{\times}$ is defined as
\[
\overline{\rho}(a)=\lim_{n\rightarrow\infty}\rho(a^n) \qquad \text{(note that  the limit exists by \cite[Theorem 3.8.1]{Ge-HK06a})} \,,
\]
and
\[
\overline{R}(H)=\{\overline{\rho}(a)\mid a\in H\setminus H^{\times} \} \subset \R_{\ge 1} \cup \{\infty\}
\]
denotes the {\it set of asymptotic elasticities}. We conclude this section with a technical lemma.

\medskip
\begin{lemma}\label{3.2}
Let $n\in \N$ and $a_1,\ldots, a_n\in \N$ be positive integers. If there exist $x_1,\ldots, x_n\in \N_0$ and $t\ge 2$ such that $a_1x_1+\ldots+a_nx_n=ta_1\cdot\ldots\cdot a_n$, then there exist $x_i'\in [0,x_i]$, for all $i\in [1,n]$, such that
\[
a_1x_1'+\ldots+a_nx_n'=a_1\cdot\ldots\cdot a_n\,.
\]
In particular, there exist $x_i^{(j)}\in [0,x_i]$, for all $i\in[1,n]$ and $j\in[1,t]$, such that  $\sum_{j\in [1,t]}x_i^{(j)}=x_i$ for every $i\in [1,n]$ and $\sum_{i\in [1,n]}a_ix_i^{(j)}=a_1\cdot\ldots\cdot a_n$ for every $j\in [1,t]$.
\end{lemma}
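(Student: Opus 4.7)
The ``in particular'' statement follows by iterating the first part: once $x_i^{(1)} := x_i' \in [0, x_i]$ has been produced with $\sum_i a_i x_i^{(1)} = N := a_1 \cdots a_n$, the residuals $x_i - x_i^{(1)}$ have weighted sum $(t-1) N$; if $t - 1 \geq 2$ one applies the first part again to the residual to peel off the next piece, and if $t - 1 = 1$ the residual itself is the last piece. I therefore concentrate on the first statement and proceed by induction on $n$.

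The base case $n = 1$ is immediate: $a_1 x_1 = t a_1$ forces $x_1 = t \geq 2$, and $x_1' = 1$ works. For $n \geq 2$ I set $N' := a_1 \cdots a_{n-1}$ so that $N = a_n N'$. If $x_n \geq N'$, the choice $x_n' = N'$ and $x_i' = 0$ for $i < n$ solves the problem directly. Otherwise $x_n < N'$, in which case $a_n x_n < N$ and
\[
\sum_{i < n} a_i x_i \;=\; tN - a_n x_n \;>\; (t - 1) N \;\geq\; a_n N',
\]
while this sum is automatically divisible by $a_n$. The task now is to find $y_n \in [0, x_n]$ together with $y_i \in [0, x_i]$ for $i < n$ satisfying $\sum_{i<n} a_i y_i = N - a_n y_n$, i.e.\ to realize at least one of the $x_n + 1$ values $N - k a_n$ ($k = 0, \ldots, x_n$) as a weighted partial sum over the first $n - 1$ variables.

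My plan for producing such a realization is to apply the inductive hypothesis to $(a_1, \ldots, a_{n-1})$ iteratively: each application peels off a sub-tuple whose weighted sum is exactly $N'$, and combining several of these peels with an appropriate choice of $y_n$ should hit one of the required targets. The main obstacle I expect is the accounting: after each peel one must check that the residual weighted sum is still at least $2 N'$ so that the inductive hypothesis applies again, and simultaneously one must track which residues modulo $a_n$ are attained, since the targets all lie in a single residue class modulo $a_n$ whereas the peels are multiples of $N'$ (and $\gcd(N', a_n)$ may be much smaller than both). The degenerate case $a_n = 1$, in which $N = N'$ and the residue-class constraint is vacuous, is treated separately by using $x_n$ as a flexible ``filler'' that can be chosen freely in $[0, x_n]$ to slide the sum into the interval $[N - x_n, N]$.
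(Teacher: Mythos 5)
Your reduction of the ``in particular'' statement to the main statement by iterating on $t$ is correct and matches the paper's one-line remark. Your induction sets up cleanly, and the base case $n=1$ and the easy subcase $x_n\ge N'$ of the inductive step are fine.

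However, the hard subcase $x_n<N'$ is not proved: you describe a plan (``apply the inductive hypothesis to $(a_1,\dots,a_{n-1})$ iteratively'', ``combining several of these peels with an appropriate choice of $y_n$ should hit one of the required targets'') and then explicitly flag the obstacle without resolving it. The obstacle is genuine and arises before the accounting you worry about: the inductive hypothesis for $n-1$ variables requires the weighted sum $\sum_{i<n}a_i x_i$ to equal $s\cdot N'$ for some integer $s\ge 2$, but here $\sum_{i<n}a_ix_i = tN-a_nx_n = a_n(tN'-x_n)$, which need not be a multiple of $N'$ at all. So the first ``peel'' may already be illegal. Even if one could peel, the reachable partial sums are multiples of $N'$ while the targets $N-a_ny_n$ are multiples of $a_n$; as you note, $\gcd(N',a_n)$ can be small, and you do not produce a value in the intersection lying in the required window $[N-a_nx_n,\,N]$. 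The $a_n=1$ ``filler'' remark is likewise only a heuristic: it is not shown that the attainable partial sums on the first $n-1$ coordinates ever fall into $[N-x_n,N]$.

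The paper avoids induction on $n$ entirely. It picks, by pigeonhole, an index with $a_1x_1\ge tN/n$, writes $x_i=y_ia_1+r_i$ with $0\le r_i<a_1$ for $i\ge 2$, bounds $\sum_i a_ir_i\le N$, and deduces $a_1\bigl(x_1+\sum_{i\ge 2}y_ia_i\bigr)\ge N$; from there a direct adjustment of $x_1$ or of the $y_i$ reaches the target exactly. The case where some $a_i=1$ is handled separately by the same idea applied to the indices with $a_i\ge 2$, using the $a_i=1$ coordinates as a filler with a controlled bound. In short, the paper replaces your ``peel off $N'$'' mechanism (which needs a divisibility that does not hold) with a division-with-remainder argument whose error term is uniformly bounded by $N$, which is what makes the accounting close.
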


\begin{proof}
The assertion is obvious for $n=1$. If $n=2$, then $a_1x_1\ge a_1a_2$ or $a_2x_2\ge a_1a_2$ whence the assertion follows immediately.
Suppose that $n \ge 3$ and  distinguish two cases.

\medskip
\noindent{\bf Case 1. } $\min \{a_1,\ldots, a_n\}\ge 2$.

After renumbering if necessary we
assume that  $a_1x_1\ge \frac{ta_1\cdot\ldots\cdot a_n}{n}$. Then
 \begin{equation}
x_1\ge \frac{2a_2\cdot\ldots\cdot a_n}{n}\ge \frac{2^{n-1}\min\{a_2,\ldots, a_n\}}{n}\ge \min\{a_2,\ldots, a_n\}\,.
 \end{equation}
For each  $i\in [2,n]$, we set $x_i=y_ia_1+r_i$ with $r_i\in[0, a_1-1]$. Then
\[
a_2r_2+\ldots+a_nr_n\le (a_1-1)(a_2+\ldots +a_n)\le a_1\cdot\ldots\cdot a_n\,.
\]
Therefore $a_1(x_1+y_2a_2+\ldots +y_na_n)\ge ta_1\cdot\ldots\cdot a_n-a_1\cdot\ldots\cdot a_n\ge a_1\cdot\ldots\cdot a_n$ which implies that
\[
x_1+y_2a_2+\ldots + y_na_n\ge a_2\cdot\ldots\cdot a_n\,.
\]
If $y_2a_2+\ldots +y_na_n\le a_2\cdot\ldots\cdot a_n$, then there exists $x_1'\in [0,x_1]$ such that $x_1'+y_2a_2+\ldots +y_na_n=a_2\cdot\ldots\cdot a_n$ and hence $a_1x_1'+a_2y_2a_1+\ldots +a_ny_na_1=a_1\cdot\ldots\cdot a_n$. If $y_2a_2+\ldots+ y_na_n>a_2\cdot\ldots\cdot a_n$, then we can choose $y_i'\in [0,y_i]$ for each $i\in [2,n]$ such that $$0\le a_2\ldots a_n-(y_2'a_2+\ldots +y_n'a_n)\le \min\{a_2,\ldots, a_n\}\le x_1\,.$$
Thus there exists $x_1'\in [0,x_1]$ such that $x_1'+y_2'a_2+\ldots y_n'a_n=a_2\cdot\ldots\cdot a_n$ and hence $a_1x_1'+a_2y_2'a_1+\ldots+ a_ny_n'a_1=a_1\cdot\ldots\cdot a_n$.

\medskip
\noindent{\bf Case 2. } $\min\{a_1,\ldots,a_n\}=1$.

After renumbering if necessary we
assume that  there exists $\tau\in [1,n]$ such that $a_{\tau}=a_{\tau+1}=\ldots=a_n=1$ and $a_i\ge 2$ for all $i\in [1,\tau-1]$. If $x_{\tau}+\ldots+x_n\ge a_1\cdot\ldots\cdot a_n$ or $\tau\le 2$, then the assertion follows immediately. Suppose $\tau=3$. Then $a_1x_1+a_2x_2+x_3+\ldots+x_n=ta_1a_2\ge 2a_1a_2$. If $a_1x_1\ge a_1a_2$ or $a_2a_3\ge a_2x_3$, then we are done. Otherwise $a_1x_1<a_1a_2$ and $a_1x_1+x_3+\ldots+x_n\ge a_1a_2$. We choose $x_i'\in [0,x_i]$ for all $i\in [3, n]$ such that  $\sum_{i=3}^nx_i'=a_1a_2-a_1x_1$. Then $a_1x_2+\sum_{i=3}^nx_i'=a_1a_2=a_1\cdot\ldots\cdot a_n$.

Now we assume that $\tau\ge 3$ and $x_{\tau}+\ldots+x_n<a_1\cdot\ldots\cdot a_n$

\smallskip
\noindent{\bf\  Case 2.1. } $\frac{ta_1\cdot\ldots\cdot a_n}{\tau}\le x_{\tau}+\ldots+x_n< a_1\cdot\ldots\cdot a_n$.

 Thus
\begin{align*}
&x_{\tau}+\ldots+x_n\ge \frac{2a_1\ldots a_{\tau}}{\tau}\ge \frac{2^{\tau-1}\min\{a_1,\ldots, a_{\tau-1}\}}{\tau}\ge \min\{a_1,\ldots, a_{\tau-1}\}\,.\\
 \text{and }\quad &a_1x_1+\ldots +a_{\tau-1}x_{\tau-1}>a_1\cdot\ldots\cdot a_n\,.
 \end{align*}

 We choose $x_i'\in [0, x_i]$ for each $i\in [1, \tau-1]$ such that
 \[
 0\le a_1\cdot\ldots\cdot a_n-(a_1x_1'+\ldots+ a_{\tau-1}x_{\tau-1}')\le \min\{a_1,\ldots, a_{\tau-1}\}\le x_{\tau}+\ldots+x_n\,.
 \]
 Therefore we choose $x_i'\in [0, x_i]$ for each $i\in [\tau+1,n]$ such that
 $x_{\tau}'+\ldots +x_n'=a_1\cdot\ldots\cdot a_n-(a_1x_1'+\ldots+ a_{\tau-1}x_{\tau-1}')$.
 It follows that $a_1x_1'+\ldots +a_nx_n'=a_1\cdot\ldots\cdot a_n$.

 \smallskip
 \noindent{\bf \ Case 2.2. } $\frac{ta_1\cdot\ldots\cdot a_n}{\tau}> x_{\tau}+\ldots+x_n$.

  Then there must exist $j\in [1,\tau-1]$ such that $a_jx_j\ge \frac{ta_1\cdot\ldots\cdot a_n}{\tau}$, say
 $a_1x_1\ge \frac{ta_1\cdot\ldots\cdot a_n}{\tau}$, which implies that
$$x_1\ge \frac{ta_2\ldots a_{\tau-1}}{\tau}\ge \frac{2^{\tau-2}\min\{a_1,\ldots, a_{\tau-1}\}}{\tau}\ge \min\{a_1,\ldots, a_{\tau-1}\}\,.$$

For each  $i\in [2,\tau-1]$, we set $x_i=y_ia_1+r_i$ with $r_i\in[0, a_1-1]$. Then
\[
a_2r_2+\ldots+a_{\tau-1}r_{\tau-1}\le (a_1-1)(a_2+\ldots +a_{\tau-1})\le a_1\cdot\ldots\cdot a_{\tau-1}=a_1\cdot\ldots\cdot a_n\,.
\]
Therefore $$a_1(x_1+y_2a_2+\ldots +y_{\tau-1}a_{\tau-1})+x_{\tau}+\ldots +x_n\ge ta_1\cdot\ldots\cdot a_n-a_1\cdot\ldots\cdot a_n\ge a_1\cdot\ldots\cdot a_n\,.$$

Suppose that  $a_1(x_1+y_2a_2+\ldots +y_{\tau-1}a_{\tau-1})<a_1\cdot\ldots\cdot a_n$. Then we can choose $x_1'=x_1$, $x_i'=a_1y_i$ for all $i\in [2, \tau-1]$, and  $x_i'\in [0, x_i]$ for all $i\in [\tau,n]$ with
$$\sum_{i=\tau}^nx_i'=a_1\cdot\ldots\cdot a_n-a_1(x_1+y_2a_2+\ldots +y_{\tau-1}a_{\tau-1})\in [1, x_{\tau}+\ldots +x_n]\,.$$
It follows that $a_1x_1'+\ldots a_nx_n'=a_1\cdot\ldots\cdot a_n$.

Suppose that  $a_1(x_1+y_2a_2+\ldots +y_{\tau-1}a_{\tau-1})\ge a_1\cdot\ldots\cdot a_n$.
If $y_2a_2+\ldots +y_{\tau-1}a_{\tau-1}\le a_2\cdot\ldots\cdot a_n$, then there exists $x_1'\in [0,x_1]$ such that $x_1'+y_2a_2+\ldots +y_{\tau-1}a_{\tau-1}=a_2\cdot\ldots\cdot a_n$ and hence $a_1x_1'+a_2y_2a_1+\ldots +a_{\tau-1}y_{\tau-1}a_1=a_1\cdot\ldots\cdot a_n$. If $y_2a_2+\ldots+ y_{\tau-1}a_{\tau-1}>a_2\cdot\ldots\cdot a_n$, then we can choose $y_i'\in [0,y_i]$ for each $i\in [2,\tau-1]$ such that $$0\le a_2\cdot\ldots\cdot a_n-(y_2'a_2+\ldots +y_{\tau-1}'a_{\tau-1})\le \min\{a_2,\ldots, a_{\tau-1}\}\le x_1\,.$$
Thus there exists $x_1'\in [0,x_1]$ such that $x_1'+y_2'a_2+\ldots+ y_{\tau-1}'a_{\tau-1}=a_2\cdot\ldots\cdot a_n$ and hence $a_1x_1'+a_2y_2'a_1+\ldots+ a_{\tau-1}y_{\tau-1}'a_1=a_1\cdot\ldots\cdot a_n$.

\bigskip
The in particular statement follows by induction on $t$.
\end{proof}

\medskip
\section{locally finitely generated monoids} \label{3}
\medskip

In this section we prove Theorems \ref{theorem1} and \ref{theorem2} formulated in the Introduction. We start with a series of lemmas.

\smallskip
\begin{lemma}\label{3.6}
Let $H$ be a monoid such that  $H_{\red}$ is finitely generated.
\begin{enumerate}
\item For every $a\in H\setminus H^{\times}$, there exists $N=N(a)\in \N$ such that
$\rho(a^{tN})=\rho(a^{N})$ for every $t\in \N$ and $\overline{\rho}(a)=\rho(a^N)$.

\item $\overline{R}(H)=\{\rho(a)\mid a\in H\setminus H^{\times} \text{ with the property that }  \rho(a^{t})=\rho(a) \text{ for all } t\in \N \}$.

\item There exists $a\in H$ such that $\rho(a)=\rho(H)=\sup \overline{R}(H)$.

\item There exists an atom $b\in \mathcal A(H)$ such that  $\lim_{n\rightarrow\infty}\rho(b^n)=\inf\overline{R}(H)$.
\end{enumerate}
\end{lemma}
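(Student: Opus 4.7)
The plan is to leverage two structural features of $H$: finite generation of $H_{\red}$ forces $H$ to be a \BF-monoid with only finitely many associate classes of atoms, and the sequences $n \mapsto \max \mathsf L(a^n)$ and $n \mapsto \min \mathsf L(a^n)$ are super- and sub-additive respectively (by concatenating factorizations). In the finitely generated setting their per-step rates $\rho^*(a)$ and $\rho_*(a)$ are rational and attained at some finite indices, which is the one substantive input needed; with it, all four parts follow by elementary manipulations.

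For (1), Fekete's lemma applied to the super-additive sequence $\max \mathsf L(a^n)$ gives $\rho^*(a) = \sup_n \max \mathsf L(a^n)/n$, and finite generation lets one realize this supremum at some $N_1$ (the maximization amounts to an integer linear program on the preimage of $a^n$ whose optimal value is linear along a suitable arithmetic progression). Symmetrically, $\rho_*(a) = \min \mathsf L(a^{N_2})/N_2$ for some $N_2$. Setting $N = N_1 N_2$, the sandwich
\[
t \max \mathsf L(a^N) \le \max \mathsf L(a^{tN}) \le tN\, \rho^*(a) = t \max \mathsf L(a^N)
\]
forces equality for every $t$, and analogously for $\min$; dividing yields $\rho(a^{tN}) = \rho(a^N) = \overline{\rho}(a)$. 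Part (2) is then immediate: $\supseteq$ holds because a constant sequence $\rho(a^t)$ equals its limit $\overline{\rho}(a)$, while $\subseteq$ follows by replacing each $a$ with $a^{N(a)}$.

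For (3), the pointwise inequality $\overline{\rho}(a) \ge \rho(a)$ comes from the same super/sub-additivity at step $1$, so $\sup \overline R(H) \ge \rho(H)$; the reverse bound is trivial from $\overline{\rho}(a) = \lim \rho(a^n) \le \rho(H)$. The attainment $\rho(a) = \rho(H)$ for some $a$ is the accepted-elasticity theorem for finitely generated monoids (another LP-type argument over the finitely many atoms of $H_{\red}$).

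For (4), I claim $\inf \overline R(H)$ is already attained on atoms. If $a = u_1 \cdots u_k$ with $u_i \in \mathcal A(H)$, then super-additivity of $\max \mathsf L$ and sub-additivity of $\min \mathsf L$ yield $\rho^*(a) \ge \sum_i \rho^*(u_i)$ and $\rho_*(a) \le \sum_i \rho_*(u_i)$, and the elementary mediant inequality (the denominators are positive because finite generation forces $\rho(H) < \infty$ and $\rho^*(u_i) \ge 1$, so $\rho_*(u_i) \ge 1/\rho(H) > 0$) gives
\[
\overline{\rho}(a) \ge \frac{\sum_i \rho^*(u_i)}{\sum_i \rho_*(u_i)} \ge \min_i \overline{\rho}(u_i).
\]
Hence $\inf \overline R(H) = \inf\{\overline{\rho}(u) \mid u \in \mathcal A(H)\}$, and since $H_{\red}$ has only finitely many atoms up to associates this infimum is a minimum realized by some $b \in \mathcal A(H)$; applying (1) to $b$ delivers $\lim_n \rho(b^n) = \overline{\rho}(b) = \inf \overline R(H)$. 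The main obstacle throughout is the attainment of $\rho^*(a)$, $\rho_*(a)$, and $\rho(H)$ at finite indices — the single genuinely non-formal ingredient and the unique place where the hypothesis on $H_{\red}$ is really used; once that is granted, everything else is bookkeeping.
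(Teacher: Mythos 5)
Your proposal is correct and takes essentially the same route as the paper: part (1) is exactly the content of the cited \cite[Theorem 3.8.1]{Ge-HK06a} (you reconstruct it via Fekete plus the attainment-at-finite-index fact, which is precisely the non-formal input the citation supplies), part (2) is the same two-line argument, part (3) rests on the same accepted-elasticity theorem \cite[Theorem 3.1.4]{Ge-HK06a}, and part (4) is the same super/sub-additivity plus mediant inequality used in the paper, just phrased at the level of $\rho^*$, $\rho_*$ rather than at a common exponent $N$.
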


\begin{proof}
1. This follows immediately by \cite[Theorem 3.8.1]{Ge-HK06a}.

2. This follows from the definition and from 1.

3. It follows from \cite[Theorem 3.1.4]{Ge-HK06a} that $H$ has accepted elasticity. Let $a\in H\setminus H^{\times}$ with $\rho(a)=\rho(H)$. Then $\lim_{n\rightarrow\infty}\rho(a^n)=\rho(H)=\sup \overline{R}(H)$.

4. If $a\in H\setminus H^{\times}$ is not an atom, then $a=u_1\cdot\ldots\cdot u_{\ell}$ where $\ell\ge 2$ and $u_i\in \mathcal A(H)$. Let $N\in \N$  such that $\lim_{n\rightarrow \infty}\rho(a^n)=\rho(a^N)$ and $\lim_{n\rightarrow \infty}\rho(u_i^n)=\rho(u_i^N)$  for every $i\in [1,\ell]$. Then $\rho(a^N)\ge \min \{\rho(u_i^N)]\mid i\in [1,\ell]\}$. It follows by $\mathcal A(H)$ is finite that
 $$\inf \overline{R}(H)=\inf \left\{\lim_{n\rightarrow\infty}\rho(u^n)\mid u\in \mathcal A(H)\right\}=\min \left\{\lim_{n\rightarrow\infty}\rho(u^n)\mid u\in \mathcal A(H)\right\}\,. \qedhere
 $$
\end{proof}

\begin{definition}
Let $H$ be an atomic monoid and let $a,b\in H\setminus H^{\times}$. We say the pair $(k,\ell) \in \N_0^2\setminus\{(0,0)\}$ is nice with respect to $(a,b)$ if  for every $t\in \N$
\[
\begin{aligned}
\max\mathsf L((a^{k}b^{\ell})^t)&=t\max\mathsf L(a^{k}b^{\ell}) \quad  \text{ and }  \quad
\min\mathsf L((a^{k}b^{\ell})^t)&=t\min\mathsf L(a^{k}b^{\ell})\,.
\end{aligned}\]
It is easy to check that $(k, \ell)$ is nice if and only if  $\rho((a^kb^{\ell})^t)=\rho(a^kb^{\ell})$ for all $t\in \N$.
\end{definition}

\begin{lemma}
Let $H$ be an atomic monoid, $a,b\in H\setminus H^{\times}$, and let $(k,\ell) \in \N_0^2\setminus\{(0,0)\}$ be a nice pair with respect to $(a,b)$.
\begin{enumerate}
\item $(tk, t\ell)$ is a nice pair with respect to $(a,b)$ for each $t\in \N$.

\item $\rho(a^kb^\ell)\in \overline{R}(H)$.

\item For every $x\in \Q$ with $x\ge 0$, there exists a nice pair $(k', \ell')$ with respect to $(a,b)$ such that $\ell'/k'=x$, where $k'\in \N$ and $\ell'\in \N_0$.
\end{enumerate}

\end{lemma}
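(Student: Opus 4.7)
The plan is to use the equivalent reformulation already noted in the definition, namely that $(k,\ell)$ is nice with respect to $(a,b)$ if and only if $\rho((a^kb^\ell)^t) = \rho(a^kb^\ell)$ for every $t\in\N$. With this reformulation all three parts become essentially bookkeeping.

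For part (1), I would set $c = a^kb^\ell$, so that $a^{tk}b^{t\ell} = c^t$ and $(a^{tk}b^{t\ell})^s = c^{ts}$ for any $s\in\N$. Using niceness of $(k,\ell)$ twice, $\rho(c^{ts}) = \rho(c) = \rho(c^t)$, which gives the characterization for $(tk,t\ell)$.

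For part (2), the element $c = a^kb^\ell$ lies in $H\setminus H^\times$ because $a,b$ are non-units and $(k,\ell)\ne(0,0)$. Niceness gives $\rho(c^n) = \rho(c)$ for all $n\in\N$, so
\[
\overline{\rho}(c) = \lim_{n\to\infty}\rho(c^n) = \rho(c) = \rho(a^kb^\ell),
\]
and hence $\rho(a^kb^\ell) \in \overline{R}(H)$.

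For part (3), write $x = p/q$ with $p\in\N_0$ and $q\in\N$, and consider $d = a^qb^p \in H\setminus H^\times$. I would invoke Lemma \ref{3.6}(1) applied to $d$ to obtain $N\in\N$ with $\rho(d^{tN}) = \rho(d^N)$ for every $t\in\N$. Setting $k' = qN$ and $\ell' = pN$, one has $a^{k'}b^{\ell'} = d^N$ and $(a^{k'}b^{\ell'})^t = d^{tN}$, so $\rho((a^{k'}b^{\ell'})^t) = \rho(a^{k'}b^{\ell'})$ for every $t$; by the equivalent characterization $(k',\ell')$ is nice, and $\ell'/k' = p/q = x$. The only subtlety — and the closest thing to an obstacle — is that part (3) implicitly relies on the standing hypothesis that $H_{\red}$ is finitely generated so that Lemma \ref{3.6}(1) is available; parts (1) and (2) need only atomicity.
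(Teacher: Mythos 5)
Your proof is correct and follows essentially the same route as the paper: part (1) directly from the reformulation $\rho((a^kb^\ell)^t)=\rho(a^kb^\ell)$, part (2) by observing that niceness forces $\overline{\rho}(a^kb^\ell)=\rho(a^kb^\ell)$, and part (3) by invoking Lemma~3.6(1). Your remark about the hypothesis is well taken: the lemma is stated for an atomic monoid, but the paper's own proof of part (3) invokes Lemma~3.6(1), which requires $H_{\red}$ finitely generated, so that hypothesis is indeed implicitly in force for (3) (as it is throughout the surrounding Proposition~3.8). A small bonus of your write-up is that in part (3) you keep the exponents straight — taking $d=a^qb^p$ with $x=p/q$ so that $\ell'/k'=pN/qN=x$ and $k'=qN\in\N$ — whereas the paper's text takes $a^mb^n$ with $x=m/n$ and then reads off $(Nm,Nn)$, which gives $\ell'/k'=n/m$ rather than $x$; your version is the intended one.
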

\begin{proof}
1. It is obvious by definition.

2. It follows immediately by Lemma \ref{3.6}.2.

3. Let $x\in \Q$ with $x\ge 0$ and  let $m,n\in \N_0$ such that $x=m/n$. Then $a^mb^n\in H\setminus H^{\times}$. It follows by Lemma \ref{3.6}.1 that there exists $N\in \N$ such that $\rho((a^{Nm}b^{Nn})^t)=\rho(a^{Nm}b^{Nn})$ for all $t\in \N$.  Therefore $(Nm,Nn)$ is a nice pair with respect to $(a,b)$ such that $Nm/Nn=x$.
\end{proof}

\begin{proposition}\label{3.8}
Let $H$ be a monoid such that  $H_{\red}$ is finitely generated and $\min\overline{R}(H)<\max \overline{R}(H)$. Let $c\in H$ with $\lim_{n\rightarrow \infty}\rho(c^n)=\max \overline{R}(H) $ and let $b\in \mathcal A(H)$ with $\lim_{n\rightarrow \infty}\rho(b^n)=\min\overline{R}(H)$.

\begin{enumerate}

\item There exists $M\in \N$ satisfying that  for every $k\in \N$ and every $\ell\in \N_0$, there exist $\ell_1,\ldots, \ell_k\in \Z$ with $\ell_1+\ldots+\ell_k=\ell$  and all $ c^Mb^{\ell_i}\in H$ such that
\begin{align*}
\max\mathsf L(c^{kM}b^{\ell})=\sum_{i=1}^{k}\max\mathsf L(c^Mb^{\ell_i})\ \  \text{ and }\ \
\min\mathsf L(c^{kM}b^{\ell})=\sum_{i=1}^{k}\min\mathsf L(c^Mb^{\ell_i})\,.
\end{align*}


\item  Set $a=c^M$.
Let $(k,\ell)\in \N_0^2\setminus\{(0,0)\}$ be a nice pair with respect to $(a,b)$ and
 let $\ell_1,\ldots, \ell_{k}\in \Z$ with $\ell_1+\ldots+\ell_{k}=\ell$ and all $ab^{\ell_i}\in H$ such that
\begin{align*}
\max\mathsf L(a^{k}b^{\ell})=\sum_{i=1}^{k}\max\mathsf L(ab^{\ell_i}) \text{ and }
\min\mathsf L(a^{k}b^{\ell})=\sum_{i=1}^{k}\min\mathsf L(ab^{\ell_i})\,.
\end{align*}

\begin{enumerate}

\item For distinct $i_1,i_2\in [1, k]$ and any $t_1,t_2\in \N_0$, we have \[\begin{aligned}
\max\mathsf L((ab^{\ell_{i_1}})^{t_1}(ab^{\ell_{i_2}})^{t_2})&=t_1\max\mathsf L(ab^{\ell_{i_1}})+t_2\max\mathsf L(ab^{\ell_{i_2}})\,,\\
\text{and }\quad \min\mathsf L((ab^{\ell_{i_1}})^{t_1}(ab^{\ell_{i_2}})^{t_2})&=t_1\min\mathsf L(ab^{\ell_{i_1}})+t_2\min\mathsf L(ab^{\ell_{i_2}})\,.
\end{aligned}\]
In particular, $(1, \ell_i)$ is a nice pair with respect to $(a, b)$ for each $i\in [1,k]$.

\item Let $\tau$  be the maximal non-negative integer $\ell$ such that $\rho(\mathsf L(ab^{\ell}))=\rho(\mathsf L(a))$. If $\ell/k>\tau$, then $\ell_j\ge \tau$ for every $j\in [1,k]$.
\end{enumerate}

\item Let $I$ be the set of all $t\in \N_{\ge \tau}$ such that $(1, t)$ is a nice pair respect to $(a,b)$.
      \begin{enumerate}
      \item $I$ is infinite.

      \item Suppose $I=\{t_1,t_2,\ldots\}\subset \N_{\ge \tau}$ with $\tau=t_1<t_2<\ldots$. Then $\rho(ab^{t_j})\ge \rho(ab^{t_{j+1}})$ for all $j\in \N$ and $\lim_{j\rightarrow \infty}\rho(ab^{t_j})=\inf \overline{R}(H)$.
      \end{enumerate}

\item Let $i\in \N$, let $x\in \Q$ with $t_i<x<t_{i+1}$, and let  $k, \ell\in\N$ with $\ell/k=x$  be such that $(k, \ell)$ is  a nice pair  with respect to $(a,b)$. Then  $$\rho(a^kb^{\ell})=\frac{(t_{i+1}-x)\max\mathsf L(ab^{t_{i+1}})+(x-t_i)\max\mathsf L(ab^{t_i})}{(t_{i+1}-x)\min\mathsf L(ab^{t_{i+1}})+(x-t_i)\min\mathsf L(ab^{t_i})}\,.$$

\item $\overline{R}(H)=\{q\in\Q\mid \inf \overline{R}(H)\le q\le \sup \overline{R}(H)\}$.
\end{enumerate}
\end{proposition}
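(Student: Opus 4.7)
The plan is to exploit that $H_{\red}$ being finitely generated reduces the combinatorics of factorizations of $c^{kM}b^\ell$ to a linear Diophantine system in a fixed finite number of atom variables; Lemma~\ref{3.2} then lets one split any extremal factorization into $k$ extremal sub-factorizations, one for each copy of $c^M$. All five parts will follow from iterated use of this splitting idea together with the maximality of $\lim\rho(c^n)$ and the minimality of $\lim\rho(b^n)$.

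For part~1, I would list the atoms $u_1,\ldots,u_r$ of $H_{\red}$, write $c$ and $b$ as products in the $u_i$, and observe that any factorization of $c^{kM}b^\ell$ corresponds to a solution $(n_1,\ldots,n_r)\in\N_0^r$ of the equation $\prod_i u_i^{n_i}=c^{kM}b^\ell$ in $H_{\red}$, of length $n_1+\ldots+n_r$. Choose $M$ large enough, essentially a common multiple of the $\mathsf v_{u_i}(c)$, so that the hypothesis of Lemma~\ref{3.2} applies to the exponent vector of an extremal factorization; that lemma yields a partition $n_i=\sum_{j=1}^k n_i^{(j)}$ for which each partial product $\prod_i u_i^{n_i^{(j)}}$ lies in $c^M b^{\Z}$, equivalently defines an element $c^M b^{\ell_j}$ with $\sum_j\ell_j=\ell$. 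Extremality of the original factorization then forces each block to be extremal for its $c^M b^{\ell_j}$, producing both additive formulas at once.

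Parts 2 and 3 are consequences of part~1. For 2(a), applying part~1 to $(ab^{\ell_{i_1}})^{t_1}(ab^{\ell_{i_2}})^{t_2}=a^{t_1+t_2}b^{t_1\ell_{i_1}+t_2\ell_{i_2}}$ yields a decomposition whose length, combined with extremality inherited from the decomposition of $a^kb^\ell$, forces additivity; the in-particular assertion is the specialisation $(t_1,t_2)=(t,0)$. For 2(b), if some $\ell_j<\tau$, then replacing $\ell_j$ by $\tau$ and shifting the excess to another $\ell_{j'}$ would, by the definition of $\tau$ together with the additive formula just proved, strictly increase $\rho(a^kb^\ell)$, contradicting extremality. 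For 3(a), I would construct arbitrarily large elements of $I$ by invoking part~1 for nice pairs $(k,\ell)$ with $\ell/k$ as large as desired and extracting nice $(1,\ell_i)$'s via 2(a). Monotonicity in 3(b) is another extremality-splitting argument, and the limit is identified with $\inf\overline{R}(H)=\lim\rho(b^n)$ by comparing, for large $t$, the factorizations of $ab^t$ with those of $b^t$ itself.

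Part~4 then follows because, for a nice $(k,\ell)$ with $t_i<\ell/k<t_{i+1}$, parts~2(a) and 2(b) force every $\ell_j$ to equal either $t_i$ or $t_{i+1}$; the two multiplicities are determined by $\sum_j\ell_j=\ell$, and the weighted formula drops out. Finally, for part~5, Lemma~\ref{3.6} already places the endpoints $\inf\overline R(H)$ and $\sup\overline R(H)$ inside $\overline R(H)$, and for a rational $q$ strictly between them I would pick the unique index $i$ with $\rho(ab^{t_{i+1}})\le q\le\rho(ab^{t_i})$ and solve the linear equation of part~4 for a rational slope $x\in(t_i,t_{i+1})$; choosing $k,\ell\in\N$ with $\ell/k=x$ and $(k,\ell)$ nice then produces an element $a^kb^\ell$ with $\overline\rho(a^kb^\ell)=\rho(a^kb^\ell)=q$, using niceness in the first equality. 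The main obstacle is part~1: selecting $M$ so that the hypothesis of Lemma~\ref{3.2} is genuinely met at every atom coordinate of an extremal factorization is the core technical step, and the same splitting machinery has to be re-deployed in 2(a) and 3(a)--(b) without losing extremality.
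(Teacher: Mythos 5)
Your plan for part 1 has a genuine gap. You propose to take an extremal factorization $z = \prod_i u_i^{n_i}$ of $c^{kM}b^\ell$ in $\mathsf Z(H) = \mathcal F(\mathcal A(H))$, apply Lemma~\ref{3.2} to the exponent vector $(n_1,\ldots,n_r)$, and claim the resulting partition $n_i = \sum_{j=1}^k n_i^{(j)}$ produces partial products of the form $c^M b^{\ell_j}$. But nothing forces the partial products to be of that shape: $H$ is not free, the $u_i$ are not primes, and an arbitrary subword of a factorization of $c^{kM}b^\ell$ need not equal $c^M b^{\ell_j}$ for any $\ell_j$. In fact it is unclear what the numbers $a_i$ in Lemma~\ref{3.2} should even be in your set-up (``$\mathsf v_{u_i}(c)$'' is not well defined since $c$ has many factorizations). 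The paper's crucial device, which your sketch does not contain, is to pass to the auxiliary monoid
\[
H_0=\{(x,y)\in \mathsf Z(H)\times \mathsf Z(H)\mid \pi(x)=\pi(y)=c^kb^{\ell} \text{ for some } k\in \N_0,\ \ell\in \Z\}\,,
\]
a saturated (hence finitely generated) submonoid of $\mathsf Z(H)\times\mathsf Z(H)$. Its atoms are \emph{pairs of factorizations} of elements that are automatically of the form $c^{k_i}b^{\ell_i}$, so decomposing a pair $(x,y)$ (with $|x|$ minimal and $|y|$ maximal) into atoms of $H_0$ yields, after applying Lemma~\ref{3.2} to the $c$-exponents $k_i$, blocks that really are of the required form \emph{and} that deliver the $\max$ and the $\min$ formulas simultaneously with one and the same sequence $(\ell_j)$. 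Your proposal handles only a single extremal factorization and therefore does not explain why the same partition into exponents $\ell_j$ can work for both $\max\mathsf L$ and $\min\mathsf L$ at once, which the statement requires.

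There is a second, smaller gap in part 4. You assert that 2(a) and 2(b) ``force every $\ell_j$ to equal either $t_i$ or $t_{i+1}$.'' They do not: those parts give $\ell_j\in I$ and $\ell_j\ge\tau$, but the $\ell_j$ can a priori be any elements of $I$ with average $\ell/k$, e.g.\ $t_{i-2}$ and $t_{i+3}$. The paper closes this by passing to the nice pair $(tk,t\ell)$ and minimizing the number $C_t$ of elements of $I$ lying in the interval $[\min\ell_j,\max\ell_j]$; a swap argument (replacing two outer values by an intermediate one in the right proportions, again via niceness and part 1) shows the minimum $C_N$ is at most $2$, which is what concentrates the $\ell_j$ on $\{t_i,t_{i+1}\}$. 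Without this consolidation step the weighted interpolation formula in part 4, and hence the density statement in part 5, does not follow.
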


\begin{proof}
We may assume that $H$ is reduced. We denote by $\mathsf Z(H):=\mathcal F(\mathcal A(H))$ the factorization monoid of $H$ and by $\pi:\mathsf Z(H)\rightarrow H$ the factorization homomorphism. If $z \in \mathsf Z (H)$, then $|z| = |z|_{\mathcal F ( \mathcal A (H))}$ denotes the length of $z$.

1. Assume to the contrary that there exists $n\in \N$ such that $c\t b^n$. Then $b^n=cd$ for some $d\in H\setminus H^{\times}$. Let $N\in \N$ such that $\lim_{m\rightarrow\infty}\rho(c^m)=\rho(c^{N})$, $\lim_{m\rightarrow\infty}\rho(b^m)=\rho(b^{nN})$, and $\lim_{m\rightarrow\infty}\rho(d^m)=\rho(d^{N})$. Thus $\rho(b^{nN})\ge \min \{\rho(d^N), \rho(c^N)\}\ge \rho(d^N)\ge \rho(b^{nN})$ which implies that $\rho(b^{nN})=\rho(d^N)=\rho(c^N)$, a contradiction.
Thus $c\not|\ b^n$ for every $n\in \N$.

Let
$$H_0=\{(x,y)\in \mathsf Z(H)\times \mathsf Z(H)\mid \pi(x)=\pi(y)=c^kb^{\ell}\in H \text{ for some $k\in \N_0$ and some $\ell\in \Z$}\}\,.$$
Then $H_0$ is a saturated submonoid of $\mathsf Z(H)\times \mathsf Z(H)$.
Since $H$ is finitely generated, it follows by \cite[Proposition 2.7.5]{Ge-HK06a} that  $H_0$ is finitely generated.

Suppose  $\mathcal A(H_0)=\{(x_1, y_1), \ldots, (x_t, y_t)\}$ with $\pi(x_i)=\pi(y_i)=c^{k_i}b^{\ell_i}\in H$ for each $i\in [1,t]$,
 where $t\in \N, k_1,\ldots, k_t\in \N_0$ and $\ell_1,\ldots, \ell_t\in \Z$.
Note that $(b,b)\in H_0$ is an atom of $H_0$. We obtain that $\min\{k_1,\ldots,k_t\}=0$. After renumbering if necessary, we assume that there exists $t_0\in[1, t]$ such that $k_i\ge 1$ for all $i\in [1,t_0]$ and $k_i=0$ for all $i\in [t_0, t]$.

Let $M=\prod_{i=1}^{t_0}k_i$ and let $k\in \N$, $\ell\in \N_0$.  We choose  $(x,y)\in \mathsf Z(H)\times \mathsf Z(H)$ with $\pi(x)=\pi(y)=c^{kM}b^{\ell}$ such that $|x|=\min \mathsf L(c^{kM}b^{\ell})$ and $|y|=\max \mathsf L(c^{kM}b^{\ell})$.

Suppose $(x,y)=\prod_{i=1}^t(x_i,y_i)^{v_i}$, where $v_i\in \N_0$. If $v_i\neq 0$, then $|x_i|=\min \mathsf L(c^{k_i}b^{\ell_i})$ and $|y_i|=\max \mathsf L(c^{k_i}b^{\ell_i})$.
Therefore
\begin{align*}
\max\mathsf L(c^{kM}b^{\ell})&=\sum_{i=1}^tv_i|y_i|=\sum_{i=1}^tv_i\max\mathsf L(c^{k_i}b^{\ell_i})\,,\\
\min\mathsf L(c^{kM}b^{\ell})&=\sum_{i=1}^tv_i|x_i|=\sum_{i=1}^tv_i\min\mathsf L(c^{k_i}b^{\ell_i})\,,\\
\sum_{i=1}^t k_iv_i&=kM \quad \text{ and } \quad \sum_{i=1}^{t} \ell_iv_i=\ell\,.
 \end{align*}

Since $$kM=k\prod_{i=1}^{t_0}k_i=\sum_{i=1}^tv_ik_i=\sum_{i=1}^{t_0}v_ik_i\,,$$
 it follows by Lemma \ref{3.2} that there exist $x_i^{(j)}\in [0, v_i], i\in [1,t_0], j\in [1,k]$ such that $\sum_{j=1}^kx_i^{(j)}=v_i$ for every $i\in [1,t_0]$ and $\sum_{i=1}^{t_0}x_i^{(j)}k_i=M$ for every $j\in [1,k]$. Let
\begin{align*}
\ell_1'&=\sum_{i=1}^{t_0}x_i^{(1)}\ell_i+\sum_{i=t_0+1}^tv_i\ell_i\\
 \text{and }\quad
 \ell_j'&=\sum_{i=1}^{t_0}x_i^{(j)}\ell_i\quad \text{ for every }j\in [2,k]\,.
\end{align*}

  Then  $\ell_1'+\ldots+\ell_k'=\sum_{i=1}^{t}v_i\ell_i=\ell$ and
\begin{align*}
\max\mathsf L(c^{kM}b^{\ell})\ge \sum_{j=1}^k\max\mathsf L(c^{M}b^{\ell_j'}) &\ge \sum_{i=1}^tv_i\max\mathsf L(c^{k_i}b^{\ell_i})=\max\mathsf L(c^{kM}b^{\ell})\,,\\
\min\mathsf L(c^{kM}b^{\ell})\le \sum_{j=1}^k\min\mathsf L(c^{M}b^{\ell_j'}) &\le \sum_{i=1}^tv_i\max\mathsf L(c^{k_i}b^{\ell_i})=\min\mathsf L(c^{kM}b^{\ell})\,.
\end{align*}

2. (a)  Without loss of generality, we assume that $i_1=1$ and $i_2=2$. Let $t=\max\{t_1,t_2\}$. Then
$$\begin{aligned}
&\quad t\sum_{j=1}^{k}\max\mathsf L(ab^{\ell_i})=\max\mathsf L((a^{k}b^{\ell})^t)\\
&\ge\max \mathsf L\big((ab^{\ell_1})^{t_1}(ab^{\ell_2})^{t_2}\big)+
(t-t_1)\max\mathsf L(ab^{\ell_1})+(t-t_2)\max\mathsf L(ab^{\ell_2})+t\sum_{j=3}^{k}\max\mathsf L(ab^{\ell_i})\\
&\ge t\sum_{j=1}^{k}\max\mathsf L(ab^{\ell_i})\,.
\end{aligned} $$
It follows that $$\max \mathsf L\big((ab^{\ell_1})^{t_1}(ab^{\ell_2})^{t_2}\big)=t_1\max\mathsf L(ab^{\ell_1})+t_2\max\mathsf L(ab^{\ell_2})\,.$$
By the similar argument, we can obtain $$\min \mathsf L\big((ab^{\ell_1})^{t_1}(ab^{\ell_2})^{t_2}\big)=t_1\min\mathsf L(ab^{\ell_1})+t_2\min\mathsf L(ab^{\ell_2})\,.$$

\medskip
(b) Suppose $\ell/k>\tau$.
 Since $\sum_{i=1}^{k}\ell_i=\ell>\tau$,
there exists some $i\in [1,k]$, say $i=1$, such that $\ell_1>\tau$. Thus $\rho(\mathsf L(ab^{\ell_1}))<\rho(\mathsf L(a))$ by the definition of $\tau$.

Assume to the contrary that there exists some $i\in [1,k]$, say $i=2$, such that  $\ell_2<\tau$.
Then $$\begin{aligned}
\rho(\mathsf L(a))&=\rho(\mathsf L(ab^{\tau}))=\rho(\mathsf L((ab^{\tau})^{\ell_1-\ell_2}))\\
&=\frac{\max\mathsf L(a^{(\ell_1-\ell_2)}b^{\tau(\ell_1-\ell_2)})}{\min\mathsf L(a^{(\ell_1-\ell_2)}b^{\tau(\ell_1-\ell_2)})}\\
&=\frac{\max\mathsf L\big((ab^{\ell_1})^{\tau-\ell_2}(ab^{\ell_2})^{\ell_1-\tau}\big)}{\min\mathsf L\big((ab^{\ell_1})^{\tau-\ell_2}(ab^{\ell_2})^{\ell_1-\tau}\big)}\\
&=\frac{(\ell_1-\tau)\max\mathsf L(ab^{\ell_2})+(\tau-\ell_2)\max\mathsf L(ab^{\ell_1})}{(\ell_1-\tau)\min\mathsf L(ab^{\ell_2})+(\tau-\ell_2)\min\mathsf L(ab^{\ell_1})}\\
&<\rho(\mathsf L(a))\,,
\end{aligned}$$
a contradiction.

3. (a)
Assume to the contrary that $I$ is finite. Let $r=\max I$ and
let $k,\ell\in \N$ with  $\ell/k>r\ge \tau$  such that $(k,\ell)$ is a nice pair with respect to $(a,b)$.
Then  there exist $\ell_1, \ell_2,\ldots, \ell_{k}\in \Z$ with $\ell_1+\ldots+\ell_{k}=\ell$ and
\begin{align*}
\max\mathsf L(a^{k}b^{\ell})=\sum_{i=1}^{k}\max\mathsf L(ab^{\ell_i})\ \  \text{ and }\ \
\min\mathsf L(a^{k}b^{\ell})=\sum_{i=1}^{k}\min\mathsf L(ab^{\ell_i})\,.
\end{align*}
It follows by Lemma \ref{3.8}.2 that $\ell_i\ge \tau$ and $\ell_i\in I$ for all $i\in [1,k]$.
 Since $\sum_{i=1}^{k}\ell_i=\ell>kr$,
there exists some $i\in [1,k]$, say $i=1$, such that $\ell_1>r$. We obtain a contradiction to $\ell_1\in I$ and $\max I=r$.

(b)
Let $I=\{t_1, t_2, \ldots\}$ with $\tau=t_0<t_1<\ldots$.
For every $i\in \N$, we have that
\begin{align*}
\rho(\mathsf L(ab^{t_i}))&=\rho(\mathsf L((ab^{t_i})^{t_{i+1}-t_1}))=\rho(\mathsf L((ab^{t_1})^{t_{i+1}-t_i}(ab^{t_{i+1}})^{t_i-t_1}))\\
&\ge\frac{(t_{i+1}-t_i)\max\mathsf L(ab^{t_1})+(t_i-t_1)\max\mathsf L(ab^{t_{i+1}})}{(t_{i+1}-t_i)\min\mathsf L(ab^{t_1})+(t_i-t_1)\min\mathsf L(ab^{t_{i+1}})}\\
&\ge \min \{\rho(\mathsf L(ab^{t_1})), \rho(\mathsf L(ab^{t_{i+1}}))\} \\
&=\rho(\mathsf L(ab^{t_{i+1}}))\,.
\end{align*}

In order to show $\lim_{j\rightarrow\infty}\rho(ab^{t_j})=\inf\overline{R}(H)$, we prove that $\lim_{\ell\rightarrow\infty}\rho(ab^{\ell})=\inf\overline{R}(H)$.

Note that $a\not| b^n$ for any $n\in\N$.
Let
$$H_1=\{(x,y)\in \mathsf Z(H)\times \mathsf Z(H)\mid \pi(x)=\pi(y)=a^kb^{\ell}\in H \text{ for some $k\in \N_0$ and some $\ell\in \Z$}\}\,.$$
Then $H_1$ is a saturated submonoid of $\mathsf Z(H)\times \mathsf Z(H)$.
Since $H$ is finitely generated, we obtain $H_1$ is finitely generated.
Suppose  $\mathcal A(H_1)=\{(x_1, y_1), \ldots, (x_t, y_t)\}$ with $\pi(x_i)=\pi(y_i)=a^{k_i}b^{\ell_i}\in H$ for each $i\in [1,t]$,
 where $t\in \N, k_1,\ldots, k_t\in \N_0$ and $\ell_1,\ldots, \ell_t\in \Z$.
Since $(z_1,z_2)\in \mathcal A(H_1)$ for any $z_1,z_2\in \mathsf z(a)$, we have that there exists $i\in [1,t]$ such that $k_i=1$.
Let $\ell_{\max}=\max\{\ell_i\mid k_i=1\}$ and  $\ell_{\min}=\min\{\ell_i\mid k_i=1\}$.

For every $\ell\ge \ell_{\max}$, we let $(x,y)\in H_1$ with $\pi(x)=ab^{\ell}$ such that $|x|=\min \mathsf L(ab^{\ell})$ and $|y|=\max \mathsf L(ab^{\ell})$.
Suppose $(x,y)=\prod_{i=1}^{t}(x_i,y_i)^{v_i}$, where all $v_i\in \N_0$. Therefore $1=\sum_{i=1}^t k_iv_i$ and $\ell=\ell_iv_i $. It follows  that there exists $\ell'\in [\ell_{\min}, \ell_{\max}]$ such that
\begin{align*}
\max\mathsf L(ab^{\ell})&=\max\mathsf L(ab^{\ell'})+\max\mathsf L(b^{\ell-\ell'})\le \max\mathsf L(ab^{\ell_{\max}})+\max\mathsf L(b^{\ell-\ell_{\min}})\\
\min\mathsf L(ab^{\ell})&=\min\mathsf L(ab^{\ell'})+\min\mathsf L(b^{\ell-\ell'})\ge \min\mathsf L(ab^{\ell_{\min}})+\min\mathsf L(b^{\ell-\ell_{\max}})\\
&\ge \min\mathsf L(ab^{\ell_{\min}})+\min\mathsf L(b^{\ell-\ell_{\min}})-\min\mathsf L(b^{\ell_{\max}-\ell_{\min}})
\end{align*}

If follows that $$\rho(b^{\ell})\le \rho(ab^{\ell})\le \frac{\max\mathsf L(ab^{\ell_{max}})+\max\mathsf L(b^{\ell-\ell_{min}})}{\min\mathsf L(ab^{\ell_{min}})-\min\mathsf L(b^{\ell_{max}-\ell_{min}})+\min\mathsf L(b^{\ell-\ell_{min}})}\,. $$
Since $$\lim_{\ell\rightarrow\infty}\frac{\max\mathsf L(ab^{\ell_{max}})+\max\mathsf L(b^{\ell-\ell_{min}})}{\min\mathsf L(ab^{\ell_{min}})-\min\mathsf L(b^{\ell_{max}-\ell_{min}})+\min\mathsf L(b^{\ell-\ell_{min}})}=\lim_{\ell\rightarrow\infty}\frac{\max\mathsf L(b^{\ell-\ell_{min}})}{\min\mathsf L(b^{\ell-\ell_{min}})}=\inf \overline{R}(H)\,,$$
we obtain that $\lim_{\ell\rightarrow\infty}\rho(ab^{\ell})=\inf \overline{R}(H)$.

4. Let $j\in \N$ and $x\in \Q$ with $t_j<x<t_{j+1}$. There exist $k,\ell\in \N$ with $\ell/k=x$ such that $(k,\ell)$ is a nice pair with respect to $(a,b)$.
We fix such a pair. Then for any $t\in \N$, $(tk, t\ell)$ is a nice pair with respect to $(a,b)$.
By 1. and 2.b., we choose $\ell_1,\ldots, \ell_{tk}\in \N_{\ge \tau}$ with $\sum_{i=1}^{tk} \ell_i=t\ell$  and
\begin{align*}
\max\mathsf L(a^{tk}b^{t\ell})=\sum_{i=1}^{tk}\max\mathsf L(ab^{\ell_i}) \text{ and }
\min\mathsf L(a^{tk}b^{t\ell})=\sum_{i=1}^{tk}\min\mathsf L(ab^{\ell_i})\,,
\end{align*}
such that $$C_t=\big|I\cap [\min\{\ell_i\mid i\in[1,tk]\}, \max\{\ell_i\mid i\in[1,tk]\}]\big|$$ is minimal. Let $N\in \N$ such that $C_{N}=\min \{C_t\mid t\in \N\}$ and $\ell_1,\ldots, \ell_{Nk}$ are the corresponding non-negative integers.
It follows by Lemma \ref{3.8}.3.a that  $\ell_i\in I$ for all $i\in [1,Nk]$.

Without loss of generality, we let $\ell_1=\max\{\ell_i\mid i\in [1,Nk]\}$ and $\ell_2=\min\{\ell_i\mid i\in [1,Nk]\}$.
Assume to the contrary that $C_{N}\ge 3$. Then there is a $y\in I$ such that $\ell_2<y<\ell_1$.
Let $S=\ell_1\cdot\ldots\cdot \ell_{Nk}$ be the sequence over $\N_{\ge \tau_M}$. If $\mathsf v_{\ell_1}(S)(\ell_1-y)\le \mathsf v_{\ell_2}(S)(y-\ell_2)$, then we choose $\ell_1'\cdot\ldots\cdot \ell_{Nk(y-\ell_2)}'=\ell_2^{(y-\ell_2)\mathsf v_{\ell_2}(S)-\mathsf v_{\ell_1}(S)(\ell_1-y)}y^{(\ell_1-\ell_2)\mathsf v_{\ell_1}(S)}\prod_{i=3}^{Nk}\ell_i^{y-\ell_2}$ and hence $\sum_{i=1}^{Nk(y-\ell_2)}\ell_i'=N\ell(y-\ell_2)$.
Since
\begin{align*}
&(\ell_1-\ell_2)\max\mathsf L(ab^y)=
\max\mathsf L((ab^y)^{\ell_1-\ell_2})=\\
&\qquad \max\mathsf L((ab^{\ell_1})^{y-\ell_2}(ab^{\ell_2})^{\ell_1-y})=
(y-\ell_2)\max\mathsf L(ab^{\ell_1})+(\ell_1-y)\max\mathsf L(ab^{\ell_2})\\
\text{and}&\\
&(\ell_1-\ell_2)\min\mathsf L(ab^y)=
\min\mathsf L((ab^y)^{\ell_1-\ell_2})=\\
&\qquad \min\mathsf L((ab^{\ell_1})^{y-\ell_2}(ab^{\ell_2})^{\ell_1-y})=
(y-\ell_2)\min\mathsf L(ab^{\ell_1})+(\ell_1-y)\min\mathsf L(ab^{\ell_2})\,,
\end{align*}
we have
\begin{align*}
&\max\mathsf L(a^{Nk(y-\ell_2)}b^{N\ell (y-\ell_2)})=(y-\ell_2)\sum_{i=1}^{Nk}\max\mathsf L(ab^{\ell_i})=\sum_{i=1}^{Nk(y-\ell_2)}\max\mathsf L(ab^{\ell_i'})\\
&\min\mathsf L(a^{Nk(y-\ell_2)}b^{N\ell (y-\ell_2)})=(y-\ell_2)\sum_{i=1}^{Nk}\min\mathsf L(ab^{\ell_i})=\sum_{i=1}^{Nk(y-\ell_2)}\min\mathsf L(ab^{\ell_i'})\,.
\end{align*}
But $C_{N(y-\ell_2)}<C_{N}$, a contradiction to the minimality of $C_{N}$.
If $\mathsf v_{\ell_1}(S)(\ell_1-y)\ge \mathsf v_{\ell_2}(S)(y-\ell_2)$, then we can get a contradiction similarly.

Therefore  $C_{N}\le 2$.
  Note that
\[
\ell_2 = \min \{\ell_i \mid i \in [1, Nk]\} \le \frac{\sum_{i=1}^{Nk}\ell_i}{Nk} = \frac{\ell}{k} \le \max \{\ell_i \mid i \in [1, Nk]\} = \ell_1 \,.
\]
If $\ell_1=\ell_2$, then $\ell/k = \ell_1$, a contradiction to $t_j<\ell/k<t_{j+1}$. Thus we get that $\ell_2 < \ell/k < \ell_1$. Since
 $t_j$ is the maximal element of $I$ that is smaller than $\ell/k$ and $t_{j+1}$ is the minimal element of $I$ that is larger than  $\ell/k$, we obtain that $\ell_2 \le t_j < t_{j+1} \le \ell_1$ whence $\{t_j, t_{j+1} \} \subset I_M \cap [\ell_2, \ell_1]$. Since $C_{N} \le 2$ and $\{\ell_i \mid i \in [1, Nk] \} \subset I \cap [\ell_2, \ell_1]$, it follows that
\begin{equation} \label{crucial2}
\{\ell_i\mid i\in [1,Nk]\}= I \cap [\ell_2, \ell_1] = \{t_j, t_{j+1}\} \,.
\end{equation}
Then
\[
\begin{aligned}
\rho(a^{Nk}b^{N\ell}) & = \frac{\max \mathsf L ( a^{Nk}b^{N\ell} )}{\min \mathsf L ( a^{Nk}b^{N\ell} )} \\
& = \frac{\sum_{i=1}^{Nk} \max \mathsf L ( ab^{\ell_i})}{\sum_{i=1}^{Nk} \min \mathsf L (a b^{\ell_i})}  \\
& =\frac{x_1\max\mathsf L(ab^{t_{j}})+x_2\max\mathsf L(ab^{t_{j+1}})}{x_1\min\mathsf L(ab^{t_{j}})+x_2\min\mathsf L(ab^{t_{j+1}})} \,,
\end{aligned}
\]
where by \eqref{crucial2}
\[
x_1 = |\{i \in [1, Nk] \mid \ell_i = t_j\}| \quad \text{and} \quad x_2 = |\{i \in [1, Nk] \mid \ell_i = t_{j+1}\}| \,.
\]
Comparing exponents of $a$ and $b$ we obtain the equations
\[
x_1t_j + x_2t_{j+1} = N\ell  \quad \text{and} \quad x_1 + x_2 = Nk
\]
whence
\[
(x_1,x_2)= \left(\frac{(kt_{j+1}-\ell)N}{t_{j+1}-t_j}, \frac{(\ell-kt_j)N}{t_{j+1}-t_j} \right) \,.
\]
Plugging in this expression for $(x_1, x_2)$ we obtain that
\[
\begin{aligned}
\rho(a^kb^{\ell})=\rho(a^{Nk}A_0^{N\ell}) & =
 \frac{ (k t_{j+1} - \ell) \max\mathsf L(ab^{t_{j}}) + (\ell - kt_j)\max\mathsf L(ab^{t_{j+1}})}{(k t_{j+1} - \ell) \min \mathsf L(ab^{t_{j}}) + (\ell - kt_j)\min \mathsf L(ab^{t_{j+1}})} \\
& =  \frac{ ( t_{j+1} - x) \max\mathsf L(ab^{t_{j}}) + (x - t_j)\max\mathsf L(ab^{t_{j+1}})}{(t_{j+1} - x) \min \mathsf L(ab^{t_{j}}) + (x - t_j)\min \mathsf L(ab^{t_{j+1}})} \,.
\end{aligned}
\]

5. It follows by Lemma \ref{3.6}.2 that
$\overline{R}(H)\subset \{q\in \Q\mid \inf \overline{R}(H)\le q\le \sup\overline{R}(H)\}$.

 By 3., we know that $\rho(ab^{t_j})\in \overline{R}(H)$ for each $j\in \N$ and $\rho(H)\in \overline{R}(H)$. If suffices to prove that $\{q\in \Q\mid \rho(ab^{t_{j+1}})<q<\rho(ab^{t_{j}}))\}\subset \overline{R}(H)$.

By 4., we obtain that for each $j\in \N$,
\begin{align*}
&\{q\in \Q\mid \rho(ab^{t_{j+1}})<q<\rho(ab^{t_{j}}))\}\\
=&\left\{\frac{ ( t_{j+1} - x) \max\mathsf L(ab^{t_{j}}) + (x - t_j)\max\mathsf L(ab^{t_{j+1}})}{(t_{j+1} - x) \min \mathsf L(ab^{t_{j}}) + (x - t_j)\min \mathsf L(ab^{t_{j+1}})}\mid x\in \Q \text{ and } t_j<x<t_{j+1}\right\}\\
\subset& \{\rho(a^kb^{\ell})\mid (k,\ell) \text{  is a nice pair with respcet to }(a,b) \text{ such that }t_j<\ell/k<t_{j+1}\}\\
\subset &\overline{R}(H)\,. \qedhere
\end{align*}
\end{proof}

\medskip
\noindent{\bf Proof of Theorem \ref{theorem1}. }
Suppose that $H$ is a locally finitely generated monoid. For every $q\in \Q$ with $\sup\overline{R}(H)>q>\inf\overline{R}(H)$, there exist $a,b\in H\setminus H^{\times}$ such that $\lim_{n\rightarrow \infty}\rho(a^n)>q>\lim_{n\rightarrow \infty}\rho(b^n)$.

Let $S=\LK ab\RK$. Then $S_{\red}$ is finitely generated. It follows  by  Proposition \ref{3.8}.5 that
$$q\in \left\{p\in \Q\mid \lim_{n\rightarrow \infty}\rho(a^n)>p>\lim_{n\rightarrow \infty}\rho(b^n)\right\}\subset \overline{R}(S)\subset \overline{R}(H)\,.$$
Therefore $H$ is asymptotic fully elastic.

If $\min \overline{R}(H)=1$, then it follows by  Proposition \ref{3.8}.5 and Lemma \ref{3.6}.2 that
$$\overline{R}(H)=\{q\in \Q\mid 1\le q\le \rho(H)\}\subset \{\rho(L )\mid  L \in \mathcal L (H) \}\,.$$
Therefore $H$ is fully elastic.
\qed

\begin{proof}[\bf Proof of Theorem \ref{theorem2}]
Without restriction we may suppose that  $H$ is reduced and we set  $\mathcal A(H)=\{u_1, \ldots, u_m\}$.
Proposition \ref{3.8}.5 and Lemma \ref{3.6}.2 imply that
\[
\overline{R}(H)=\{q\in \Q\mid 1\le q\le \rho(H)\}\subset \{\rho(L) \mid L \in \mathcal L (H) \}\,.
\]
Thus it remains to prove that, for a given $\epsilon>0$,  the set
\[
\{\rho(L)\mid L\in \mathcal L(H)\text{ and  }1\le \rho(L)\le r-\epsilon\}
\]
is finite. Assume to the contrary that this set is infinite. Then there is a sequence $(a_k)_{k=1}^{\infty}$ with elements from $H$ such that  $1\le\rho(a_k)\le r-\epsilon$ and $(\rho(a_k))_{k=1}^{\infty}$ is strictly increasing or decreasing. For each $k$, we fixed a factorization $z_k=u_1^{t_k^{(1)}}\cdot\ldots\cdot u_m^{t_k^{(m)}}$ of $a_k$ with $|z_k|=\min \mathsf L(a_k)$. Let $I\subset[1,m]$ be the set of all $i\in [1,m]$ such that $\{t_k^{(i)}\mid k\in \N\}$ is finite and let $M\in \N$ such that $t_k^{(i)}\le M$ for all $k\in \N$ and $i\in I$. For each $j\in [1,m]\setminus I$, let $\rho_j=\lim_{n\rightarrow\infty}\rho(u_j^n)$ and let $N\in \N$ such that $\rho_j=\rho(u_j^N)$ for all $j\in [1,m]\setminus I$.
Then for all $k\ge N$, we have $\max\mathsf L(u_j^k)=k\rho_j$ for all $j\in [1,m]\setminus I$ by the minimality of $|z_k|$ and $\{t_k^{(j)}\}$ is infinity.

It follows that  $$\rho(a_k)\ge \frac{\sum_{j\in [1,m]\setminus I}t_k^{(j)}\rho_j}{M(m-|I|)+\sum_{j\in [1,m]\setminus I}t_k^{(j)}}$$ and hence $$\lim_{k\rightarrow\infty}\rho(a_k)\ge \lim_{k\rightarrow\infty}\frac{\sum_{j\in [1,m]\setminus I}t_k^{(j)}\rho_j}{\sum_{j\in [1,m]\setminus I}t_k^{(j)}}\ge r\,,$$
 a contradiction.
\end{proof}

\begin{proposition}\label{4.1}
If  $(H_i)_{i\in \N}$ is a family of   \BF-monoids with the same elasticity, then their coproduct $\coprod_{i=1}^{\infty}H_i$ is fully elastic.
 \end{proposition}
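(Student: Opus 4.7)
The plan is to realize any rational $q = \alpha/\beta$ with $\gcd(\alpha,\beta) = 1$ and $1 < q < \rho(H)$ as the elasticity of some element of $H = \coprod_{i \in \N} H_i$, by combining elements of elasticity exceeding $q$ from finitely many factors with atoms drawn from further distinct factors. Writing $\rho = \rho(H_i)$ for the common value, I would first observe that every $c \in H$ decomposes uniquely as $c = \prod_i c_i$ with $c_i \in H_i$ and $c_i \in H_i^{\times}$ for almost all $i$; since each atom of $H$ belongs to exactly one factor, factorizations split coordinatewise, giving $\mathsf L_H(c) = \sum_i \mathsf L_{H_i}(c_i)$ as a sumset. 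By the mediant inequality $\rho(c)$ lies between $\min_i \rho(c_i)$ and $\max_i \rho(c_i)$, hence $\rho(H) = \rho$.

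Given such $q$, for each $i$ I pick (using $\rho(H_i) = \rho > q$) some $a_i \in H_i$ with $\rho(a_i) > q$, and set $m_i = \min \mathsf L(a_i)$, $M_i = \max \mathsf L(a_i)$, $d_i = M_i - m_i \ge 1$. For a finite nonempty $S \subset \N$ and $N$ atoms $u_1, \ldots, u_N$ drawn from $N$ further distinct factors not indexed by $S$, the coordinatewise additivity yields
\[
\mathsf L_H\Bigl(\prod_{i \in S} a_i \cdot \prod_{j=1}^N u_j\Bigr) = \sum_{i \in S} \mathsf L(a_i) + N,
\]
so this element has elasticity $(\sum_{i \in S} M_i + N)/(\sum_{i \in S} m_i + N)$. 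Demanding this equal $\alpha/\beta$ forces $N = (\beta \sum_{i \in S} M_i - \alpha \sum_{i \in S} m_i)/(\alpha - \beta)$; positivity follows from $\sum M_i / \sum m_i > q$ by the mediant inequality, and since $\gcd(\beta, \alpha - \beta) = 1$ integrality of $N$ reduces to the single congruence $(\alpha - \beta) \mid \sum_{i \in S} d_i$.

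To secure this congruence I would apply the pigeonhole principle to the partial sums $s_k = \sum_{i=1}^k d_i$ for $k = 0, 1, \ldots, \alpha - \beta$: among these $\alpha - \beta + 1$ residues modulo $\alpha - \beta$, two must coincide, say $s_{n_1} \equiv s_{n_2} \pmod{\alpha - \beta}$ with $n_1 < n_2$, so that $S := \{n_1 + 1, \ldots, n_2\}$ has $\sum_{i \in S} d_i$ equal to a positive multiple of $\alpha - \beta$. Infinitely many factors remain available from which to draw the required $N$ distinct atoms, finishing the construction.

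The main subtlety is the divisibility step. In a general \BF-monoid one has essentially no control over the individual $M_i - m_i$ modulo $\alpha - \beta$, and when the $H_i$ need not be isomorphic one cannot simply use $\alpha - \beta$ copies of a single element across factors as one would in a coproduct $H_0^{(\N)}$. What rescues the argument is the availability of infinitely many factors each contributing some positive $d_i$, so that pigeonhole yields a set $S$ with $\sum_{i \in S} d_i$ divisible by $\alpha - \beta$ under no hypothesis on the $H_i$ beyond the common elasticity.
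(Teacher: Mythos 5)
Your proof is correct and follows essentially the same route as the paper's: coordinatewise decomposition of sets of lengths in the coproduct, selection in finitely many factors of elements whose elasticity exceeds $q$, padding with atoms drawn from further factors, and a pigeonhole argument to secure the integrality of the padding count. The only minor variation is in the pigeonhole step: you compare consecutive partial sums of the $d_i$ modulo $\alpha-\beta$, whereas the paper extracts $m-n$ indices whose differences $m_j-n_j$ lie in a common residue class modulo $m-n$; both immediately produce a finite index set $S$ with $(\alpha-\beta)\mid\sum_{i\in S}d_i$, and the rest of the computation is identical.
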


\begin{proof}
Let $H=\coprod_{i=1}^{\infty}H_i$ and $r=\rho(H_1)$.
It is clear that $\rho(H)\ge r$. Let $a\in H$. Then there exist a finite subset $I\subset \N$  and $b_i\in H_i$ for each $i\in I$ such that $a=\prod_{i\in I}b_i$. Therefore
\[\mathsf L(a)=\sum_{i\in I}\mathsf L(b_i)
\]
and hence $$\rho(a)=\frac{\max\mathsf L(b_i)}{\min\mathsf L(b_i)}\le \max\{\rho(b_i)\mid i\in I\}\le r\,.$$
It follows that $\rho(H)=r$.

Let $q=\frac{m}{n}\in \Q$ with $1<q<\rho(H)$, where $m,n\in \N$. Then for each $i\in \N$, there exists $a_i\in H_i$ such that $\rho(a_i)=\frac{m_i}{n_i}\ge q$, where $m_i,n_i\in \N$. There exist $d\in [0,m-n-1]$ and $J\subset \N$ with $|J|=m-n$ such that $m_j-n_j\equiv d\pmod {m-n}$. Therefore $m-n\t \sum_{j\in J}(m_j-n_j)$. Since $\frac{\sum_{j\in J}m_j}{\sum_{j\in J}n_j}\ge \frac{m}{n}$, we have  $$x=\frac{n\sum_{j\in J}m_j-m\sum_{j\in J}n_j}{m-n}=\frac{n\sum_{j\in J}(m_j-n_j)-(m-n)\sum_{j\in J}n_j}{m-n}\in \N\,.$$
It follows that $q=\frac{x+\sum_{j\in J}m_j}{x+\sum_{j\in J}n_j}$

 Let $J_1\subset \N$ with $|J_1|=x$, $J_1\cap J=\emptyset$ and let $c_i\in\mathcal A(H_i)$ for each $i\in J_1$.  Therefore $\rho(\prod_{j\in J}b_j\prod_{i\in J_1}c_i)=\frac{x+\sum_{j\in J}m_j}{x+\sum_{j\in J}n_j}=q$ and hence $H$ is fully elastic.
\end{proof}

\begin{example}\label{ex}
Let $H$ be a finitely generated monoid with $\min\overline{R}(H)=r>1$ (Note that numerical monoids, distinct from $(\N_0,+)$, have this property and they are not fully elastic). Then the coproduct $H^*=\coprod_{i=1}^{\infty}H_i$, with $H_i=H$ for all $i \in \N$,  is locally finitely generated. Since $\inf\overline{R}(H^*)=\inf \{\min \overline{R}(H_i)\mid i\in \N\}$, it follows  that $\min \overline{R}(H^*)=r$. By Proposition \ref{4.1}, we infer  that $H^*$ is fully elastic.
\end{example}

\medskip
\section{Weakly Krull monoids} \label{4}

Let $H$ be a  monoid. We denote  by $\mathfrak X (H)$ the set of minimal nonempty prime $s$-ideals of $H$,  and by $\mathfrak m = H \setminus H^{\times}$ the maximal $s$-ideal. Let $\mathcal I_v^* (H)$ denote the monoid of $v$-invertible $v$-ideals of $H$ (with $v$-multiplication). Then $\mathcal F_v (H)^{\times} = \mathsf q ( \mathcal I_v^* (H))$ is the quotient group of fractional  $v$-invertible $v$-ideals, and $\mathcal C_v (H) = \mathcal F_v (H)^{\times}/\{ xH \mid x \in \mathsf q (H)\}$ is the $v$-class group of $H$ (detailed presentations of ideal theory in commutative monoids can be found in \cite{HK98, Ge-HK06a}).

We denote by
\begin{itemize}

\item $\widehat{H}=\{a\in \mathsf q(H)\mid \text{ there exists } c\in H \text{ such that } ca^n\in H \text{ for all }n\in \N\}$ the completely integral closure of $H$,

\item $(H \DP \widehat H) = \{ x \in \mathsf q (H) \mid x \widehat H \subset H \} \subset H$  the conductor of $H$, and

\item $H'=\{a\in \mathsf q(H)\mid \text{ there exists }N\in \N \text{ such that } a^n\in H \text{ for all } n\ge N\}$ the seminormal closure of $H$.
\end{itemize}
We say $H$ is  completely integrally closed if $H=\widehat{H}$ and $H$ is seminormal if $H'=H$.

\medskip

\medskip
To start with the local case, we recall that  $H$ is said to be
\begin{itemize}
\item {\it primary} if $\mathfrak m \ne \emptyset$ and for all $a, b \in \mathfrak m$ there is an $n \in \N$ such that $b^n \subset aH$.

\item  {\it strongly primary} if $\mathfrak m \ne \emptyset$ and for every $a \in \mathfrak m$ there is an $n \in \N$ such that $\mathfrak m^{n} \subset aH$. We denote by $\mathcal M (a)$ the smallest $n$ having this property. 

\item {\it finitely primary} if there exist $s,\alpha\in \N$ such that $H$ is a submonoid of a factorial monoid $F=F^{\times}\times [q_1,\ldots,q_s]$ with $s$ pairwise non-associated primes $q_1,\ldots, q_s$ satisfying
$$H\setminus H^{\times}\subset q_1\cdot\ldots\cdot q_sF \text{ and } (q_1\cdot\ldots\cdot q_s)^{\alpha}F\subset H\,.$$

\item a {\it discrete valuation monoid} if it is primary and contains a prime element (equivalently, $H_{\red} \cong (\N_0,+)$).
\end{itemize}

Every strongly primary monoid is a primary \BF-monoid (\cite[Section 2.7]{Ge-HK06a}). If $H$ is finitely primary, then $H$ is strongly primary, $F=\widehat{H}$, $|\mathfrak X(H)| = s$ is called the rank of $H$, and $H$ is seminormal if and
only if
$$H=H^{\times}\cup q_1\cdot\ldots\cdot q_s F\,.$$

\begin{theorem}\label{th3}
Let $H$ be a strongly primary monoid that is not half-factorial.
\begin{enumerate}
\item $H$ is asymptotic fully elastic. More precisely, $\overline{R}(H)=\{\rho(H)\}$.

\item $\rho(H)$ is the only limit point of the set $\{\rho(L)\mid L\in \mathcal L(H)\}$.

\item If $H$ is a seminormal finitely primary monoid with rank $\ge 2$, then $\{\rho(L)\mid L\in \mathcal L(H)\}=\{\frac{n}{2}\mid n\in \N_{\ge 2}\}$.
\end{enumerate}
\end{theorem}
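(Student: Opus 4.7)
The plan is to prove the three parts in sequence, using the strongly primary hypothesis $\mathfrak m^{\mathcal M(c)} \subset cH$ crucially in Parts 1 and 2, and reducing Part 3 to an explicit combinatorial computation using the structure of seminormal finitely primary monoids.

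For Part 1, the inequality $\overline{\rho}(a) \le \rho(H)$ is immediate. For the reverse, I fix $\epsilon > 0$ and a non-unit $a \in H$, and choose $c \in H$ with $\rho(c) \ge \rho(H) - \epsilon$. Iterating $\mathfrak m^{\mathcal M(c)} \subset cH$ gives $\mathfrak m^{k \mathcal M(c)} \subset c^k H$ for every $k \in \N$, and since $a^n \in \mathfrak m^n$, for $n \ge k \mathcal M(c)$ this yields a decomposition $a^n = c^k e_{n,k}$ in $H$. Concatenating factorizations gives
\[
\rho(a^n) \ge \frac{k \max \mathsf L(c) + \max \mathsf L(e_{n,k})}{k \min \mathsf L(c) + \min \mathsf L(e_{n,k})}.
\]
The main technical obstacle is to arrange $\min \mathsf L(e_{n,k})/k \to 0$: my strategy is to iteratively extract the maximal power of $c$ that fits, continuing until the remainder has $\min \mathsf L$ below $\mathcal M(c)$. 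A geometric-series estimate (each extraction reduces $\min \mathsf L$ by at least a fixed fraction) shows the total extracted power $k$ grows linearly in $n$ while the remainder stays bounded, so the right-hand side converges to $\rho(c) \ge \rho(H) - \epsilon$. Letting $\epsilon \to 0$ yields $\overline R(H) = \{\rho(H)\}$.

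For Part 2, suppose for contradiction that some $q < \rho(H)$ is a limit point of $\{\rho(L) \mid L \in \mathcal L(H)\}$, so that $\rho(L_k) \to q$ with the $\rho(L_k)$ distinct; write $L_k = \mathsf L(a_k)$ and $m_k = \min \mathsf L(a_k)$. If $(m_k)$ is bounded, then $\max \mathsf L(a_k) \le (q+1) m_k$ is also bounded, so the $L_k$ lie in a finite collection of subsets of a bounded interval of $\N$, contradicting distinctness. If $m_k \to \infty$, I apply the iteration of Part 1 directly to $a_k$ (rather than to $a_k^n$): since $a_k \in \mathfrak m^{m_k}$, one extracts $c^{K_k}$ with $K_k$ linear in $m_k$ and bounded remainder, so $\rho(a_k) \ge \rho(c) - o(1)$ as $m_k \to \infty$. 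Choosing $c$ with $\rho(c) > q$ then contradicts $\rho(a_k) \to q$.

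For Part 3, I work with the explicit description $H = H^\times \cup q_1 \cdots q_s F$ where $F = H^\times \times [q_1, \ldots, q_s]$ is free abelian. Every non-unit has the form $a = u q_1^{e_1} \cdots q_s^{e_s}$ with $u \in H^\times$ and each $e_i \ge 1$, and a direct check shows $a$ is an atom iff $\min_i e_i = 1$. A length-$k$ factorization into atoms corresponds to a decomposition $(e_1, \ldots, e_s) = \sum_{j=1}^k (f_1^{(j)}, \ldots, f_s^{(j)})$ with every $f_i^{(j)} \ge 1$ and $\min_i f_i^{(j)} = 1$; the constraint $f_i^{(j)} \ge 1$ forces $k \le \min_i e_i$, while every $k \in [2, \min_i e_i]$ is realized by combining $k - 2$ copies of the atom $q_1 \cdots q_s$ with a 2-atom split of the remainder. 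Hence $\mathsf L(a) = [2, \min_i e_i]$ when $\min_i e_i \ge 2$, giving $\rho(a) = \min_i e_i / 2$, while $\mathsf L(a) = \{1\}$ when $\min_i e_i = 1$, giving $\rho(a) = 1 = 2/2$. Varying $(e_1, \ldots, e_s)$ over $\N^s$ produces exactly $\{n/2 : n \ge 2\}$.
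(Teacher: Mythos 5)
Your overall plan --- extract as much of a high-elasticity element $c$ from $a^n$ as possible, argue the remainder is bounded, and let the extracted power dominate the elasticity --- matches the paper, which packages this idea as a ``Claim A'': any sequence $(a_k)$ with $\max \mathsf L(a_k) \to \infty$ satisfies $\liminf_k \rho(a_k) \ge \rho(H)$, proved by dividing out the maximal power of an arbitrary non-unit $b$ and observing the quotient has $\max\mathsf L < \mathcal M(b)$. Your Part 3 is correct and in fact more explicit than the paper's, since you compute $\mathsf L(a)=[2,\min_i e_i]$ rather than only $\min\mathsf L(a)=2$; the only slip is that the unit $u$ should lie in $\widehat H^{\times}$, not $H^{\times}$. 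Your Part 2 reduces correctly to the Part 1 technique via the bounded/unbounded dichotomy on $\min\mathsf L(a_k)$.

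The gap is in the crucial step of Part 1, which Part 2 then inherits. Having written $a^n = c^k e_{n,k}$ via $a^n\in\mathfrak m^n\subset c^k H$ with $k=\lfloor n/\mathcal M(c)\rfloor$, you correctly note this does not by itself control $\min\mathsf L(e_{n,k})$. But the fix you propose --- iterating the extraction and appealing to a ``geometric-series estimate'' that each step cuts $\min\mathsf L$ by a fixed fraction --- is not justified and does not appear to be true: the available inequality $\min\mathsf L(f_i)\le k_i\min\mathsf L(c)+\min\mathsf L(f_{i+1})$ only bounds $\min\mathsf L(f_{i+1})$ from \emph{below}, not above, so there is no decay to iterate, and for some monoids (e.g.\ seminormal finitely primary of rank $\ge 2$) $\min\mathsf L$ is identically $2$ and the whole iteration scheme is vacuous. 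What actually works, in a single step, is the following: let $K$ be the genuinely \emph{maximal} integer with $c^{K}\mid a^n$ and set $e=a^n/c^K$. Then $c\nmid e$, hence $e\notin\mathfrak m^{\mathcal M(c)}$ (as $\mathfrak m^{\mathcal M(c)}\subset cH$), which forces $\max\mathsf L(e)<\mathcal M(c)$. So the remainder is bounded automatically, with no iteration, and $K\ge\lfloor n/\mathcal M(c)\rfloor\to\infty$ yields $\rho(a^n)\ge \dfrac{K\max\mathsf L(c)}{K\min\mathsf L(c)+\mathcal M(c)}\to\rho(c)$. This maximality-plus-strong-primariness observation is exactly the content of the paper's Claim A and is the missing ingredient in your sketch; the same correction is needed in the $m_k\to\infty$ branch of your Part 2.
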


\begin{proof}
We start with the following claim.

\medskip
\noindent{\bf Claim A. } Let $(a_k)_{k=1}^{\infty}$ be a sequence with  $a_k\in H$ such that $\lim_{k\rightarrow \infty}\max\mathsf L(a_k)=\infty$. Then $\liminf_{k\rightarrow \infty}\rho(a_k)\ge \rho(H)$.

\medskip
\noindent{\it Proof of \bf Claim A. } Let $b\in H\setminus H^{\times}$. By definition, for each $k\in \N$, there exists $n_k\in \N$ such that $b^{n_k}\t a_k$ and $b^{n_k+1}\nmid a_k$. Suppose $a_k=b^{n_k}b_k$ for each $k\in \N$, where $b_k\in H$. Since $H$ is strongly primary, we infer that $\max\mathsf L(b_k)<\mathcal M(a)$. It follows by $\lim_{k\rightarrow \infty}\max\mathsf L(a_k)=\infty$ that $\lim_{k\rightarrow \infty}n_k=\infty$. Therefore
$$\rho(a_k)\ge \frac{n_k\max\mathsf L(b)+\max\mathsf L(b_k)}{n_k\min\mathsf L(b)+\min\mathsf L(b_k)}\ge \frac{n_k\max\mathsf L(b)+\mathcal M(a)}{n_k\min\mathsf L(b)+\mathcal M(a)}$$
and hence $\liminf_{k\rightarrow \infty} \rho(a_k)\ge \rho(b)$. The assertion follows immediately. \qed[End of {\bf Claim A.}]

\medskip
1. We only need to prove $\overline{R}(H)\subset \{\rho(H)\}$. Let $a\in H\setminus H^{\times}$. Then $\lim_{k\rightarrow\infty}\max\mathsf L(a^k)=\infty$. By {\bf Claim A.}, $\overline{\rho}(a)=\lim_{k\rightarrow \infty}\rho(a^k)\ge \rho(H)$. It follows by $\sup \overline{R}(H)\le \rho(H)$ that  $\overline{R}(H)= \{\rho(H)\}$.

\medskip
2. Assume to the contrary that there is another limit point $r\in \R$ with $1\le r<\rho(H)$. Then there exists a sequence $(a_k)_{k=1}^{\infty}$ such that $\lim_{k\rightarrow \infty}\rho(a_k)=r$ and $\lim_{k\rightarrow\infty}\max\mathsf L(a_k)=\infty$.
It follows by {\bf Claim A. } that $r=\lim_{k\rightarrow \infty}\rho(a_k)\ge \rho(H)$, a contradiction to $r<\rho(H)$.

\medskip
3. Let $H$ be a seminormal finitely primary monoid with rank $r\ge 2$. Then $H=H^{\times}\cup q_1\cdot\ldots\cdot q_s\widehat{H}$, where $q_1,\ldots,q_s$ are $s$ pairwise non-associated primes of $\widehat{H}$. 

Let $a\in H$. If $a$ is an atom or a unit, then $\rho(a)=1=\frac{2}{2}$. Otherwise $a=\epsilon q_1^{k_1}\cdot\ldots\cdot q_s^{k_s}$ with $\epsilon\in \widehat{H}^{\times}$ and $k_1,\ldots, k_s\in \N_{\ge 2}$. Since $q_1q_2^{k_2-1}\cdot\ldots\cdot q_s^{k_s-1}$ and $\epsilon q_1^{k_1-1}q_2\cdot\ldots\cdot q_s$ are atoms, it follows that $2\in \mathsf L(a)$ and $\rho(a)=\frac{\max \mathsf L(a)}{2}$. Thus $\{\rho(L)\mid L\in \mathcal L(H)\}\subset\{\frac{n}{2}\mid n\in \N_{\ge 2}\}$.

Let $n\in \N_{\ge 2}$ and $a=(q_1\cdot\ldots\cdot q_s)^n$. Then $\min \mathsf L(a)=2$ and $\max\mathsf L(a)=n$ which imply that $\rho(a)=\frac{n}{2}$. Therefore $\{\rho(L)\mid L\in \mathcal L(H)\}=\{\frac{n}{2}\mid n\in \N_{\ge 2}\}$.
\end{proof}

\bigskip
We say $H$ is 
 {\it weakly Krull} (\cite[Corollary 22.5]{HK98}) if
$ 
H = \bigcap_{{\mathfrak p} \in \mathfrak X (H)} H_{\mathfrak p}$  and $  \{{\mathfrak p} \in \mathfrak X (H) \mid a \in {\mathfrak p}\}$ is finite for all $a \in H$  and $H$ is {\it weakly factorial} if  one of the following equivalent conditions is satisfied (\cite[Exercise 22.5]{HK98}){\rm \,:}
     \begin{itemize}
     \item Every non-unit is a finite product of primary elements.

     \item $H$ is a weakly Krull monoid with trivial $t$-class group.
     \end{itemize}

Clearly, every localization $H_{\mathfrak p}$ of  $H$ at a minimal prime ideal $\mathfrak p \in \mathfrak X (H)$ is primary, and a weakly Krull monoid $H$ is $v$-noetherian if and only if  $H_{\mathfrak p}$ is $v$-noetherian for each $\mathfrak p \in \mathfrak X (H)$. Every $v$-noetherian primary monoid is strongly primary and $v$-local (\cite[Lemma 3.1]{Ge-Ha-Le07}), and
every strongly primary monoid is a primary \BF-monoid (\cite[Section 2.7]{Ge-HK06a}). Therefore the coproduct of a family of strongly primary monoids is a \BF-monoid,
and every coproduct of a family of primary monoids is weakly factorial. A $v$-noetherian weakly Krull monoid $H$ is weakly factorial if and only if $\mathcal C_v (H)=0$ if and only if $H_{\red} \cong \mathcal I_v^* (H)$.

Let $R$ be a domain. We say $R$ is a Mori domain, if $R^{\bullet}$ is $v$-noetherian. $R$ is weakly Krull (resp. weakly factorial) if and only if its multiplicative monoid $R^{\bullet}$  is weakly Krull (resp. weakly factorial).
Weakly Krull domains were introduced by D. D. Anderson, D. F. Anderson, Mott, and Zafrullah (\cite{An-An-Za92b, An-Mo-Za92}). We recall some most basic facts.
The monoid $R^{\bullet}$ is primary if and only if $R$ is one-dimensional and local. If $R$ is a one-dimensional local Mori domain, then $R^{\bullet}$ is strongly primary(\cite[Proposition 2.10.7]{Ge-HK06a}). Furthermore, every one-dimensional semilocal Mori domain with nontrivial conductor is weakly factorial and the same holds true for generalized Cohen-Kaplansky domains. It can be seen from the definition that  one-dimensional noetherian domains are $v$-noetherian weakly Krull domains.

We continue with $T$-block monoids which are weakly Krull monoids of a combinatorial flavor and  are used to model general weakly Krull monoids.
Let $G$ be an additive abelian group, $G_0 \subset G$ a
subset, $T$ a reduced monoid and $\iota \colon T \to G$ a homomorphism. Let
$\sigma \colon \mathcal F(G_0) \to G$ be the unique homomorphism
satisfying $\sigma (g) = g$ for all $g \in G_0$. Then
\[
B = \mathcal B (G_0,T,\iota) = \{S\,t \in \mathcal F(G_0) \time T\,\mid\, \sigma (S) + \iota(t) =
0\,\} \subset \mathcal F (G_0) \time T = F
\]
the  \textit{$T$-block monoid over  $G_0$  defined by $\iota$}\,. For details about $T$-block monoids, see \cite[Section 4]{Ge-Ka-Re15a}.

 Let $D$ be another monoid. A homomorphism $\phi: H\rightarrow D$ is said to be
 \begin{itemize}
 \item divisor homomorphism  if $\phi(u)\t \phi(v)$ implies that $u\t v$ for all $u,v\in H$.
 
 \item cofinal if for every $a\in D$, there exists $u\in H$ such that $a\t \phi(u)$.
 \end{itemize}
 Suppose $\phi: H\rightarrow D$ is a divisor homomorphism and $D$ is reduced.
 Then $\phi(H)$ is a saturated submonoid of $D$ and $H_{\red}$ is isomorphic to $\phi(H)$(see \cite[Proposition 2.4.2.4]{Ge-HK06a}). Therefore we can view $H_{\red}$ as a saturated submonoid of $D$.

\medskip
The following is the main theorem of this section. Clearly, orders $R$ in algebraic number fields satisfy all assumptions of Theorem \ref{1.1}. In particular, $\mathcal C_v(R)$ is finite, it coincides with the Picard group, and every class contains a regular prime ideal (whence $G_{\mathcal P}=\mathcal C_v(H)$ holds). Suppose $R$ is a $v$-noetherian weakly Krull domain with nonzero conductor. Then all localizations $R_{\mathfrak p}$ are finitely primary and if $R$ is not semilocal, then $R$ has a regular element with is not a unit whence $\mathcal B(G_{\mathcal P})\neq \{1\}$.
 For an extended list of examples, we refer to  \cite[Examples 5.7]{Ge-Ka-Re15a}. 

\begin{theorem} \label{1.1}
Let $H$ be a  $v$-noetherian weakly Krull monoid with the conductor $\emptyset \ne \mathfrak f = (H \DP \widehat H)\subsetneq H$ such  that the localization $H_{\mathfrak p}$ is finitely primary  for each minimal prime ideal $\mathfrak{p}\in\mathfrak{X}(H)$. We set $\mathcal P^*=\{\mathfrak p\in \mathfrak X(H)\mid \mathfrak p\supset \mathfrak f\}$ and suppose $\mathcal P=\mathfrak X(H)\setminus \mathcal P^*\neq\emptyset$. Let $G_{\mathcal P}\subset \mathcal C_v(H)$ denote the set of classes containing a minimal prime ideal from $\mathcal P$, and let $\pi: \mathfrak {X}(\widehat{H})\rightarrow \mathfrak{X}(H)$ be the natural map defined by $\pi(\mathfrak P)=\mathfrak P\cap H$ for all $\mathfrak P\in \mathfrak {X}(\widehat{H})$.
\begin{enumerate}
\item Suppose $\pi$ is bijective.  If  $\widehat{H_{\mathfrak p}}^{\times}/H_{\mathfrak p}^{\times}$ is finite for each minimal prime ideal $\mathfrak{p}\in\mathfrak{X}(H)$ and $\mathcal B(G_{\mathcal P})\neq \{1\}$, then $H$ is fully elastic.

\item Suppose $\pi$ is not bijective.  If  $H$ is seminormal and $G_{\mathcal P}=\mathcal C_v(H)$ is  finite, then $\rho(H)=\infty$ and $H$ is fully elastic.

\end{enumerate}

\end{theorem}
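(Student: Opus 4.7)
My plan is to apply Theorem \ref{theorem1} in both parts, which requires verifying two properties of $H$: that it is locally finitely generated, and that $\inf \overline R(H) = 1$. Local finite generation follows in both cases from the standing hypotheses (nonzero conductor, each $H_{\mathfrak p}$ finitely primary, $\mathcal C_v(H)$ finite) together with the weakly Krull generalization of Example \ref{2.1}.4 cited there, ensuring that $\LK a \RK_{\red}$ is finitely generated for every $a \in H$. To control $\inf \overline R(H)$ I will use the $T$-block monoid transfer of \cite{Ge-Ka-Re15a}: for a $v$-noetherian weakly Krull monoid with nonzero conductor, there is a length-preserving transfer homomorphism $\theta \colon H \to \mathcal B(G_{\mathcal P}, T, \iota)$ with $T = \prod_{\mathfrak p \in \mathcal P^*} (H_{\mathfrak p})_{\red}$, which decouples factorization problems in $H$ into a Krull (block) part and a primary ($T$) part.

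For part 1, each $(H_{\mathfrak p})_{\red}$ is finitely primary of rank $1$ with finite $\widehat{H_{\mathfrak p}}^{\times}/H_{\mathfrak p}^{\times}$, so its contribution to factorization lengths is controlled by a discrete-valuation-like structure modulo a finite unit quotient. The hypothesis $\mathcal B(G_{\mathcal P}) \neq \{1\}$ produces a nontrivial zero-sum sequence over $G_{\mathcal P}$, which lifts through $\theta$ to an element $a \in H$ whose factorization behavior is governed solely by the Krull block part. Since $\mathcal B(G_{\mathcal P})$ is a Krull monoid, and every Krull monoid has $\inf \overline R = 1$ (the remark after Theorem \ref{theorem1}), we obtain $\overline \rho(a) = 1$, hence $\inf \overline R(H) = 1$, and Theorem \ref{theorem1} gives full elasticity.

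For part 2, fix $\mathfrak p \in \mathfrak X(H)$ with $|\pi^{-1}(\mathfrak p)| \geq 2$. Since $H$ is seminormal, so is the localization $H_{\mathfrak p}$, which is therefore finitely primary of rank $\geq 2$. By Theorem \ref{th3}.3, $\{\rho(L) \mid L \in \mathcal L(H_{\mathfrak p})\} = \{n/2 \mid n \in \N_{\geq 2}\}$, so $\rho(H_{\mathfrak p}) = \infty$. Using the block transfer and the finiteness of $\mathcal C_v(H) = G_{\mathcal P}$, any element of $H_{\mathfrak p}$ realizing elasticity $n/2$ lifts to an element of $H$ with the same elasticity, by multiplying with a suitable block that absorbs the class-group obstruction; this yields $\rho(H) = \infty$. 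For full elasticity, the nonemptiness of $\mathcal P$ together with finiteness of $G_{\mathcal P}$ produces Krull-type atoms in $H$ whose asymptotic elasticity equals $1$, so $\inf \overline R(H) = 1$, and Theorem \ref{theorem1} again completes the proof.

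The main obstacle I expect is the faithful lifting step: in both parts, properties established for $\mathcal B(G_{\mathcal P}, T, \iota)$ or for the local monoid $H_{\mathfrak p}$ must be pulled back to $H$ without introducing spurious elasticities coming from the unit groups or from the class-group action on primary components. The finiteness of $\widehat{H_{\mathfrak p}}^{\times}/H_{\mathfrak p}^{\times}$ in part 1 and the seminormality assumption in part 2 are precisely the hypotheses that make the lifts length-preserving; verifying this rigorously along the lines of the $T$-block machinery of \cite{Ge-Ka-Re15a} is the technical core of the argument.
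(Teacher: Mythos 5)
Your treatment of part 1 is essentially the paper's: reduce to the $T$-block monoid $B=\mathcal B(G_{\mathcal P},T,\iota)$, observe that the rank-$1$ finitely primary components with finite $\widehat{H_{\mathfrak p}}^{\times}/H_{\mathfrak p}^{\times}$ are finitely generated so $B$ is locally finitely generated, and use $\mathcal B(G_{\mathcal P})\neq\{1\}$ to produce an element of asymptotic elasticity $1$. The paper makes the last step precise via \cite[Lemma 5.4]{Ge-Sc-Zh17b} (a half-factorial subset $G_1 \subset G_{\mathcal P}$ with $\mathcal B(G_1)\neq\{1\}$, which is divisor-closed in $B$), whereas your phrasing ``governed solely by the Krull block part'' is looser but points at the same fact.

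Part 2 has a genuine gap: Theorem~\ref{theorem1} simply does not apply there, because $H$ (equivalently $B$) need not be locally finitely generated. You invoke the weakly Krull generalization of Example~\ref{2.1}.4 to get local finite generation, but that result explicitly requires \emph{finite elasticity}, and the whole point of part~2 is that $\rho(H)=\infty$. Concretely, in part 2 there is no hypothesis that $\widehat{H_{\mathfrak p}}^{\times}/H_{\mathfrak p}^{\times}$ is finite; if $D_1=(H_{\mathfrak p_1})_{\red}$ is a seminormal finitely primary monoid of rank $s\ge 2$ with $\widehat{D_1}^{\times}$ infinite, then every $\epsilon p_1\cdots p_s$ with $\epsilon\in\widehat{D_1}^{\times}$ is an atom of $D_1$ dividing a power of $p_1^2\cdots p_s^2$, so $\LK p_1^2\cdots p_s^2\RK$ has infinitely many atoms. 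Thus $D_1$, and hence $B$, fails to be locally finitely generated, and your intended application of Theorem~\ref{theorem1} collapses. The paper's proof of part 2 is structurally different: it first constructs explicit pairs of atoms $c_k,d_k$ in $B$ coming from the rank-$\ge 2$ component with $\rho(c_kd_k)=(kn+2)/2$ and $\max\mathsf L(c_kd_k)-\min\mathsf L(c_kd_k)=kn$ (giving $\rho(H)=\infty$), and then exploits the fact that, since $G_{\mathcal P}=\mathcal C_v(H)\ni 0$, the block monoid $B$ has a prime element (the sequence $0\in\mathcal F(G_{\mathcal P})$). Full elasticity then follows from \cite[Lemma 2.11]{B-C-C-K-W06}: multiplying by powers of that prime allows one to interpolate and realize every rational $q\ge 1$ as an elasticity, bypassing Theorem~\ref{theorem1} entirely. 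To repair your argument you would need to replace the appeal to Theorem~\ref{theorem1} in part 2 by some such prime-element interpolation device; the claim that $\inf\overline R(H)=1$ is not enough on its own without local finite generation.
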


\begin{proof}

Let $\delta_H \colon H \to \mathcal I_v^* (H)$ be the homomorphism defined by $\delta_H(a)=aH$. Then $\delta_H$ is a cofinal divisor homomorphism with $\mathcal C(\delta_H)=\mathcal C_v(H)$ by \cite[Proposition 2.4.5]{Ge-HK06a}.

Since $H$ is $v$-noetherian, we obtain   $\mathcal P^*$ is finite and non-empty by \cite[Theorem 2.2.5.1]{Ge-HK06a}, say $\mathcal P^*  = \{\mathfrak p_1, \ldots, \mathfrak p_n\}$ with $n \in \N$.
By \cite[Theorem 2.6.5.3]{Ge-HK06a} that $H_{\mathfrak p}$ ia a discrete valuation monoid for each $\mathfrak p\in \mathcal P$ and
by \cite[Proposition 5.3.4]{Ge-Ka-Re15a}, there exists an isomorphism
\[
\chi \colon \mathcal I_v^*(H) \to D=\coprod_{\mathfrak p\in \mathfrak X(H)}(H_{\mathfrak p})_{\red} = \mathcal F(\mathcal P) \time (H_{\mathfrak p_1})_{\red} \time \ldots \time (H_{\mathfrak p_n})_{\red}
\]
where $\chi \t \mathcal P = \id_{\mathcal P}$ and, for all $i \in [1,n]$, $D_i := (H_{\mathfrak p_i})_{\red}$ is a reduced  finitely primary monoid.
Hence \,$\chi \circ \delta_H \colon H  \to D$ is a cofinal divisor homomorphism with $\mathcal C(\chi \circ \delta_H)\cong \mathcal C_v(H)$ and we can view $H_{\red}$ as a saturated submomoid of $D$.

By \cite[ Proposition 3.4.8]{Ge-HK06a}, it is sufficient to prove the assertions for the associated $T$-block monoid
\[
B = \mathcal B (G_{\mathcal P}, T, \iota) \subset F = \mathcal F (G_{\mathcal P}) \time T \,,
\]
where $T = D_1 \times \ldots \times D_n$ and $\iota \colon T \to G$ is defined by $\iota (t) = [t]\in \mathcal C_v(H)$ for all $t \in T$.

1. Since  $\pi$ is bijective, it follows by \cite[Lemma 5.1.3]{Ge-Ka-Re15a}  that $H_{\mathfrak p}$ has rank $1$ for each $\mathfrak p\in \mathfrak X(H)$.
Since $\widehat{H_{\mathfrak p_i}}^{\time}/H_{\mathfrak p_i}^{\time}$ is finite for each $i\in [1,n]$, we have $D_i$ is finitely generated for each $i\in [1,n]$. It follows that $F$ is locally finitely generated. Therefore  $B$ is locally finitely generated.

Since $\mathcal B(G_{\mathcal P})\neq \{1\}$ implies that  there exists a half-factorial subset $G_1\subset G_P$ such that $\mathcal B(G_1)\neq \{1\}$ by \cite[Lemma 5.4]{Ge-Sc-Zh17b}, it follows that
$\min \overline{R}(\mathcal B(G_{\mathcal P}))=1$. Hence  $\min \overline{R}(B)=1$ and $B$ is fully elastic by Theorem \ref{1}.

\medskip
2.  Since $\pi$ is not bijective, it follows by \cite[Lemma 5.1.3]{Ge-Ka-Re15a}  that  there exists $i\in [1,n]$ such that the rank $s_i$ of  $H_{\mathfrak p_i}$ is larger than $1$, say i=1. Then $D_1$ is a seminormal reduced finitely primary monoid of rank $s\ge 2$. Suppose $\widehat{D_1}=\widehat{D_1}^{\times}[p_1,\ldots,p_s]$, where $p_1,\ldots,p_s$ are $s$ pairwise non-associated prime elements. Then $D_1=p_1\cdot\ldots\cdot p_s \widehat{D}\cup \{1\}$. Let $a=\epsilon p_1^{\alpha_1}\cdot\ldots\cdot p_s^{\alpha_s}\in D_1$ with $\epsilon\in \widehat{D_1}^{\times}$ and  $a\in \mathcal A(B)$ such that $|a|=\alpha_1+\ldots+\alpha_s$ is minimal and let  $n=|G_{\mathcal P}|$.

If there exists $\delta\in \widehat{D_1}^{\times}$ such that $b=\delta p_1\cdot\ldots\cdot p_s\in B$, then $b\in \mathcal A(B)$.  We infer that $c_k=\delta p_1p_2^{kn+1}\cdot\ldots\cdot p_s^{kn+1}$ and $d_k=\delta^{kn+1} p_1^{kn+1}p_2\cdot\ldots\cdot p_s$ are atoms of $B$ for all $k\in \N$.
Since $\max\mathsf L(c_kd_k)=kn+2$, we obtain that $\rho(c_kd_k)=\frac{kn+2}{2}$ for all $k\in \N$ and $\max\mathsf L(c_kd_k)- \min\mathsf L(c_kd_k)=kn$.

Otherwise let $\iota(p_1\cdot\ldots\cdot p_s)=g\neq 0$. Then $p_1\cdot\ldots\cdot p_s (-g)$ is an atom of $B$.
 We infer that $c_k=p_1p_2^{k\alpha_2 n+1}\cdot\ldots\cdot p_s^{k\alpha_s n+1}(-g)$ and $d_k= p_1^{k\alpha_1 n+1}p_2\cdot\ldots\cdot p_s(-g)$ are atoms of $B$ for all $k\in \N$.
Since $\max\mathsf L(c_kd_k)=kn+2$, we obtain that $\rho(c_kd_k)=\frac{kn+2}{2}$ for all $k\in \N$ and $\max\mathsf L(c_kd_k)- \min\mathsf L(c_kd_k)=kn$.

Thus in both cases there is a sequence $(a_k)_{k=1}^{\infty}$ with term $a_k\in H$ such that $\rho(a_k)=(kn+2)/2$ and $\max\mathsf L(a_k)-\min\mathsf L(a_k)=kn$. Therefore $\rho(H)=\infty$.

Since the sequence $0_{G_{\mathcal P}}\in\mathcal F(G_{\mathcal P})$ is a prime element of  $B$,  it follows by \cite[Lemma 2.11]{B-C-C-K-W06}  that
\begin{align*}
&\bigcup _{k=1}^{\infty}\{a/b\in \Q\mid 1\le a/b\le \rho(a_k), a-b\t \max\mathsf L(a_k)- \min\mathsf L(a_k)\}\\
=&\bigcup _{k=1}^{\infty}\{a/b\in \Q\mid 1\le a/b\le \frac{kn+2}{2}, a-b\t kn\}\\
\subset& \{\rho(L)\mid L\in \mathcal L(H)\}\,.
\end{align*}

Let $q=a/b\in \Q$ with $q\ge 1$, where $a,b\in \N$ with $\gcd(a,b)=1$.
Choose $k_0=2a(a-b)$. Then $a/b\le k_0/2\le(k_0n+2)/2$ and $a-b\t k_0n$.
It follows that  $$q\in \{a/b\in \Q\mid 1\le a/b\le \frac{k_0n+2}{2}, a-b\t k_0n\}\subset \{\rho(L)\mid L\in \mathcal L(H)\}\,.$$
Therefore $H$ is fully elastic.
\end{proof}


\providecommand{\bysame}{\leavevmode\hbox to3em{\hrulefill}\thinspace}
\providecommand{\MR}{\relax\ifhmode\unskip\space\fi MR }
\providecommand{\MRhref}[2]{%
  \href{http://www.ams.org/mathscinet-getitem?mr=#1}{#2}
}
\providecommand{\href}[2]{#2}

\end{document}